\newcommand{\hlocalref}[1]{\hyperref[#1]{\ref{#1}}}
\numberwithin{equation}{section} 
\numberwithin{figure}{section}
\numberwithin{table}{section}
\setlist[itemize]{noitemsep,topsep=0pt,leftmargin=0.23in,label={\tiny$\blacksquare$}}
\let\citep\cite
\newcommand{\cf}{cf.~} 
\newcommand{\Iverson}[1]{\ensuremath{\left[#1\right]_{\delta}}} 
\newcommand{\floor}[1]{\left\lfloor #1 \right\rfloor} 
\newcommand{\ceiling}[1]{\left\lceil #1 \right\rceil} 
\newcommand{\seqnum}[1]{\href{http://oeis.org/#1}{\color{ProcessBlue}{\underline{#1}}}}
\renewcommand{\chi}{\upchi}
\newcommand*\rel@kern[1]{\kern#1\dimexpr\macc@kerna}
\newcommand*\widebar[1]{%
  \begingroup
  \def\mathaccent##1##2{%
    \rel@kern{0.8}%
    \overline{\rel@kern{-0.8}\macc@nucleus\rel@kern{0.2}}%
    \rel@kern{-0.2}%
  }%
  \macc@depth\@ne
  \let\math@bgroup\@empty \let\math@egroup\macc@set@skewchar
  \mathsurround\z@ \frozen@everymath{\mathgroup\macc@group\relax}%
  \macc@set@skewchar\relax
  \let\mathaccentV\macc@nested@a
  \macc@nested@a\relax111{#1}%
  \endgroup
}
\newcommand{\Hn}[2]{
     \ifthenelse{\equal{#2}{1}}{H_{#1}}{H_{#1}^{\left(#2\right)}}
}
\newcommand{\Floor}[2]{\ensuremath{\left\lfloor \frac{#1}{#2} \right\rfloor}}
\title{
       Exact formulas for partial sums of the M\"obius function expressed by 
       partial sums weighted by the Liouville lambda function
}
\author{Maxie Dion Schmidt \\
        Georgia Institute of Technology \\
        School of Mathematics
}  
\date{\small\underline{Last Revised:} \today \ @\ \hhmmsstime{} \ -- \ Compiled with \LaTeX2e} 
\theoremstyle{plain} 
\newtheorem{theorem}{Theorem}
\newtheorem{prop}[theorem]{Proposition}
\newtheorem{lemma}[theorem]{Lemma}
\numberwithin{theorem}{section}
\newtheorem*{theorem*}{Theorem}
\newtheorem*{conjecture*}{Conjecture}
\theoremstyle{definition} 
\newtheorem{remark}[theorem]{Remark}
\newtheorem{definition}[theorem]{Definition}
\theoremstyle{remark}
\newtheorem*{remark*}{Remark}
\newcommand{\ManuscriptMarginSize}{0.75in}
\newcommand{\PlotFigureHorizontalScalingFactor}{0.8}
\renewcommand{\Re}{\operatorname{Re}}
\newcommand{\mathtext}[1]{\text{\rm #1}}
\begin{document} 

\maketitle

\begin{abstract} 
\noindent  
The Mertens function, $M(x) := \sum_{n \leq x} \mu(n)$, is 
defined as the summatory function of the classical M\"obius function.
The Dirichlet inverse function $g(n) := (\omega+\mathds{1})^{-1}(n)$
is defined in terms of the shifted strongly additive function $\omega(n)$ that counts the 
number of distinct prime factors of $n$ without multiplicity. 
The Dirichlet generating function (DGF) of $g(n)$ is $\zeta(s)^{-1} (1+P(s))^{-1}$ 
for $\Re(s) > 1$ where $P(s) = \sum_p p^{-s}$ is the prime zeta function. 
We study the distribution of the unsigned functions $|g(n)|$ with 
DGF $\zeta(2s)^{-1}(1-P(s))^{-1}$ 
and $C_{\Omega}(n)$ with DGF 
$(1-P(s))^{-1}$ for $\Re(s) > 1$. 
We establish formulas for the average order and variance of 
$\log C_{\Omega}(n)$ and prove a central limit theorem 
for the distribution of its values on the 
integers $n \leq x$ as $x \rightarrow \infty$. 
Discrete convolutions of the partial sums of $g(n)$ with the prime counting function 
provide new exact formulas for $M(x)$. 

\bigskip\noindent
\textbf{Keywords and Phrases:} {\it M\"obius function; Mertens function; 
                                    Liouville lambda function; prime omega function; 
                                    Dirichlet inverse; prime zeta function; 
				                inversion of generalized convolutions. } \\[0.05cm] 
\textbf{Math Subject Classifications (2010):} {\it 11N37; 11A25; 11N60; and 11N64. } 
\end{abstract} 


\newpage
\renewcommand{\contentsname}{Article Index}
\setcounter{tocdepth}{2}
\tableofcontents

\newpage
\section{Introduction} 
\label{subSection_MertensMxClassical_Intro} 
\label{example_InvertingARecRelForMx_Intro}

\subsection{Definitions}

For integers $n \geq 2$, we define the strongly and 
completely additive functions, respectively, 
that count the number of prime divisors of $n$ by 
\begin{align*}
\omega(n) & := \sum_{p|n} 1, \mathtext{ and } 
\Omega(n) := \sum_{p^{\alpha} \mid\mid n} \alpha. 
\end{align*}
That is, if $n = p_1^{\alpha_1} \times \cdots p_r^{\alpha_r}$ is the 
factorization of $n$ into powers of distinct primes, then 
$\omega(n) = r$ and $\Omega(n) = \alpha_1 + \cdots + \alpha_r$. 
We use the convention that the functions $\omega(1) = \Omega(1) = 0$. 
The M\"obius function is the multiplicative function defined by 
\cite[\seqnum{A008683}]{OEIS}
\[
\mu(n) := \begin{cases} 
	1, & \text{ if $n = 1$}; \\ 
	(-1)^{\omega(n)}, & \text{ if $n \geq 2$ and $\omega(n) = \Omega(n)$ (i.e., if $n$ is squarefree);} \\ 
	0, & \text{ otherwise.}
        \end{cases}
\]
The Mertens function is defined by the partial sums 
\cite[\seqnum{A002321}]{OEIS} 
\begin{align} 
M(x) & := \sum_{n \leq x} \mu(n), \mathtext{ for } x \geq 1. 
\end{align} 
The Liouville lambda function is the completely multiplicative function 
defined for all $n \geq 1$ by $\lambda(n) := (-1)^{\Omega(n)}$ 
\cite[\seqnum{A008836}]{OEIS}. 
The partial sums of this function are defined by 
\cite[\seqnum{A002819}]{OEIS}
\begin{equation}
\label{eqn_LxSummatoryFuncDef_v1}
L(x) := \sum\limits_{n \leq x} \lambda(n), \mathtext{ for } x \geq 1. 
\end{equation}

\begin{definition}
For any arithmetic functions $f$ and $h$, we define their 
Dirichlet convolution at $n$ by the divisor sum 
\[
(f \ast h)(n) := \sum_{d|n} f(d) h\left(\frac{n}{d}\right), \mathtext{ for } n \geq 1.
\]
The arithmetic function $f$ has a unique inverse with respect to Dirichlet convolution, 
denoted by $f^{-1}$, if and only if $f(1) \neq 0$. 
When it exists, the Dirichlet inverse of $f$ satisfies 
$(f \ast f^{-1})(n) = (f^{-1} \ast f)(n) = \delta_{n,1}$, the 
multiplicative identity function with respect to Dirichlet convolution. 
\end{definition}

We define the Dirichlet inverse function \cite[\seqnum{A341444}]{OEIS} 
\begin{equation}
\label{eqn_gInvn_def_v1}
g(n) := (\omega + \mathds{1})^{-1}(n), \mathtext{ for } n \geq 1. 
\end{equation}
Th inverse function in equation \eqref{eqn_gInvn_def_v1} 
is computed recursively by applying the formula \cite[\S 2.7]{APOSTOLANUMT}
\[
g(n) = \begin{cases}
	1, & \text{ if $n = 1$; } \\ 
	-\sum\limits_{\substack{d|n \\ d> 1}} \left(\omega(d) + 1\right) g\left(\frac{n}{d}\right), & 
	\text{ if $n \geq 2$. }
        \end{cases}
\]
The function $|g(n)| = \lambda(n) g(n)$ denotes the absolute value of $g(n)$ 
(see Proposition \hlocalref{prop_SignageDirInvsOfPosBddArithmeticFuncs_v1}). 
The summatory function of $g(n)$ is defined as follows 
\cite[\seqnum{A341472}]{OEIS}: 
\begin{equation}
\label{eqn_GInvx_PartialSumForms_v1} 
G(x) := \sum_{n \leq x} g(n) = \sum_{n \leq x} \lambda(n) |g(n)|, \mathtext{ for } x \geq 1. 
\end{equation} 

\subsection{Statements of the main results}

\subsubsection{Partial summation identities for the Mertens function}

\begin{definition}
Let the partial sums of the 
Dirichlet convolution $r \ast h$ be defined by the function 
\begin{align*} 
S_{r \ast h}(x) & := \sum_{n \leq x} \sum_{d|n} r(d) h\left(\frac{n}{d}\right), 
	\mathtext{ for } x \geq 1. 
\end{align*}
\end{definition}

Theorem \hlocalref{theorem_SummatoryFuncsOfDirCvls} 
is proved by matrix methods in 
Appendix \hlocalref{Section_PrelimProofs_Config}.

\begin{theorem} 
\label{theorem_SummatoryFuncsOfDirCvls} 
Let $r,h: \mathbb{Z}^{+} \rightarrow \mathbb{C}$ be any 
arithmetic functions such that $r(1) \neq 0$. 
Suppose that $R(x) := \sum_{n \leq x} r(n)$, $H(x) := \sum_{n \leq x} h(n)$, and that 
$R^{-1}(x) := \sum_{n \leq x} r^{-1}(n)$ for $x \geq 1$. 
The following holds for all $x \geq 1$: 
\begin{align*} 
S_{r \ast h}(x) & \phantom{:}= \sum_{d=1}^x r(d) \times H\left(\Floor{x}{d}\right) \\ 
S_{r \ast h}(x) & \phantom{:}= \sum_{k=1}^{x} H(k) \times \left(R\left(\Floor{x}{k}\right) - 
     R\left(\Floor{x}{k+1}\right)\right). 
\end{align*} 
Moreover, for all $x \geq 1$ 
\begin{align*} 
H(x) & = \sum_{j=1}^{x} S_{r \ast h}(j) \times \left(R^{-1}\left(\Floor{x}{j}\right) - 
     R^{-1}\left(\Floor{x}{j+1}\right)\right) \\ 
     & = \sum_{k=1}^{x} r^{-1}(k) \times S_{r \ast h}(x). 
\end{align*} 
\end{theorem} 

For integers $x \geq 1$, the function $\pi(x) := \sum_{p \leq x} 1$ 
denotes the classical prime counting function 
\cite[\seqnum{A000720}]{OEIS}.
We find exact formulas for $M(x)$ by applying 
Theorem \hlocalref{theorem_SummatoryFuncsOfDirCvls} 
to the expansion of the partial sums 
(see Section \hlocalref{subSection_remark_MotivationForTheDefinitionOf_gn_v2}) 
\[
\pi(x) + 1 = \sum_{n \leq x} \sum_{d|n} \left(\omega(d) + 1\right) \mu\left(\frac{n}{d}\right), 
     \mathtext{ for } x \geq 1. 
\]

\begin{theorem} 
\label{prop_Mx_SBP_IntegralFormula} 
\begin{subequations}
For all $x \geq 1$ 
\begin{align} 
\label{prop_Mx_SBP_IntegralFormula_PartA} 
M(x) & = G(x) + \sum_{1 \leq k \leq x} |g(k)| \pi\left(\Floor{x}{k}\right) \lambda(k), \\ 
\label{prop_Mx_SBP_IntegralFormula_PartB} 
M(x) & = G(x) + 
     \sum_{1 \leq k \leq \frac{x}{2}} \left(
     \pi\left(\Floor{x}{k}\right) - \pi\left(\Floor{x}{k+1}\right) 
	\right) \times G(k), \\ 
\label{eqn_RmkInitialConnectionOfMxToGInvx_ProvedByInversion_v1} 
M(x) & = G(x) + \sum_{p \leq x} G\left(\Floor{x}{p}\right). 
\end{align} 
\end{subequations}
\end{theorem}

The new results proved in this article 
provide a new lense through which we can view $M(x)$ 
in terms of sums of auxiliary unsigned functions sign weighted by $\lambda(n)$. 

\subsubsection{Distributions of auxiliary unsigned functions}

The unsigned function $C_{\Omega}(n)$ is studied by Fr\"oberg in 
\cite{FROBERG-1968}. This function has the exact formula 
\begin{equation}
\label{eqn_proof_tag_hInvn_ExactNestedSumFormula_CombInterpetIdent_v3}
C_{\Omega}(n) = \begin{cases}
     1, & \text{if $n = 1$; } \\ 
     (\Omega(n))! \times \prod\limits_{p^{\alpha}||n} \frac{1}{\alpha!}, & \text{if $n \geq 2$. }
     \end{cases}
\end{equation} 

\begin{prop} 
\label{lemma_AbsValueOf_gInvn_FornSquareFree_v1} 
For all $n \geq 1$ 
\begin{equation} 
\label{eqn_AbsValueOf_gInvn_FornSquareFree_v1} 
|g(n)| = \sum_{d|n} \mu^2\left(\frac{n}{d}\right) C_{\Omega}(d). 
\end{equation} 
\end{prop} 

\begin{theorem} 
\label{lemma_HatCAstxSum_ExactFormulaWithError_v1} 
As $n \rightarrow \infty$ 
\[
\frac{1}{n} \times \sum_{k \leq n} \log C_{\Omega}(k) = 
     (\log\log n)(\log\log\log n) \left(1 + 
     O\left(\frac{1}{\sqrt{\log\log n}}\right)\right). 
\] 
\end{theorem} 

A proof of Theorem \hlocalref{lemma_HatCAstxSum_ExactFormulaWithError_v1} is 
given in Appendix \hlocalref{Appendix_ProofOfCOmegan_LogarithmicAvgOrderFormula} 
(\cf Proposition \hlocalref{prop_VarianceStat_for_COmegan_v1}). 

\begin{definition}
The cumulative density function of the 
standard normal distribution at $z$ is 
\[
\Phi(z) = \frac{1}{\sqrt{2\pi}} \times \int_{-\infty}^{z} e^{-\frac{t^2}{2}} dt, 
     \mathtext{ for any } z \in (-\infty, \infty). 
\]
\end{definition}

\begin{theorem}
\label{conj_DetFormOfEKTypeThmForCOmegan_v1} 
For $x \geq 19$, let $\mu_x, \sigma_x := (\log\log x)(\log\log\log x)$.
For any $z \in (-\infty, \infty)$ 
\begin{equation} 
\notag
\lim_{x \rightarrow \infty} \ \frac{1}{x} \times 
	\#\left\{19 \leq n \leq x: \frac{\log C_{\Omega}(n) - \mu_x}{\sigma_x} \leq z\right\} = 
     \Phi\left(z\right). 
\end{equation}
\end{theorem} 

\section{The function $C_{\Omega}(n)$} 
\label{Section_NewFormulasForgInvn_v1} 

In this section, we define the function 
$C_{\Omega}(n)$ and explore its properties. 
The function $C_{\Omega}(n)$ is key to understanding the 
unsigned inverse sequence $|g(n)|$ through equation 
\eqref{eqn_AbsValueOf_gInvn_FornSquareFree_v1}. 

\subsection{Definitions}

\begin{definition}
We define the following bivariate sequence for integers $n \geq 1$ and $k \geq 0$: 
\begin{align} 
\label{eqn_CknFuncDef_v2} 
C_k(n) := \begin{cases} 
     \varepsilon(n), & \text{ if $k = 0$; } \\ 
     \sum\limits_{d|n} \omega(d) C_{k-1}\left(\frac{n}{d}\right), & \text{ if $k \geq 1$. } 
     \end{cases} 
\end{align} 
Using the notation for iterated convolution in 
Bateman and Diamond \cite[Def.~2.3; \S 2]{ANT-BATEMAN-DIAMOND}, we have 
$C_0(n) \equiv \omega^{\ast 0}(n)$ and $C_k(n) \equiv \omega^{\ast k}(n)$ for 
integers $n, k \geq 1$. 
The special case of \eqref{eqn_CknFuncDef_v2} where 
$k := \Omega(n)$ occurs frequently in the next sections of the 
article. To avoid cumbersome notation when referring to this common variant, we suppress the 
duplicate index $n$ by writing $C_{\Omega}(n) := C_{\Omega(n)}(n)$ \cite[\seqnum{A008480}]{OEIS}. 
\end{definition}

\begin{remark}
By recursively expanding the definition of $C_k(n)$ 
at any fixed $n \geq 2$, we see that 
we can form a chain of at most $\Omega(n)$ iterated (or nested) divisor sums by 
unfolding the definition of \eqref{eqn_CknFuncDef_v2} inductively. 
We also see that at fixed $n$, the function 
$C_k(n)$ is non-zero only possibly when 
$1 \leq k \leq \Omega(n)$ when $n \geq 2$. 
By equation \eqref{eqn_proof_tag_hInvn_ExactNestedSumFormula_CombInterpetIdent_v3} we have 
that $C_{\Omega}(n) \leq (\Omega(n))!$ for all $n \geq 1$ with 
equality precisely at the squarefree integers. 
\end{remark}

\subsection{Logarithmic variance}
\label{subSection_AvgOrdersOfTheUnsignedSequences} 

\begin{definition}
\label{def_AvgOrder_FirstAndSecondMomentsOfFuncs_v1}
For any integers $x \geq 1$, we define the expectation (or mean value) of the 
function $\log C_{\Omega}(n)$ on the integers $1 \leq n \leq x$ by 
\[
\mathbb{E}\left[\log C_{\Omega}(x)\right] := \frac{1}{x} \times \sum_{n \leq x} 
     \log C_{\Omega}(n). 
\]
The variance of this function is given by the 
centralized second moments 
\[
\operatorname{Var}\left[\log C_{\Omega}(x)\right] := 
	\frac{1}{x} \times \sum_{n \leq x} \left(\log C_{\Omega}(n) - 
	\mathbb{E}\left[\log C_{\Omega}(x)\right]\right)^2. 
\]
\end{definition}

\begin{prop}
\label{prop_VarianceStat_for_COmegan_v1}
\label{prop_COmeganFunc_Variance_v1}
For all sufficiently large $n > e^e$ 
\[
\sqrt{\operatorname{Var}\left[\log C_{\Omega}(n)\right]} = 
     (\log\log n) (\log\log\log n) \left(1 + 
     O\left(\frac{1}{\sqrt{\log\log n}}\right)\right), 
	\mathtext{ as } n \rightarrow \infty. 
\]
\end{prop}
\begin{proof}
We have that for all $n \geq 1$  
\begin{equation}
\label{eqn_proof_tag_S2OmeganSum_v1}
S_{2,\Omega}(n) := \left(\sum_{k \leq n} \log C_{\Omega}(k)\right)^2 - 
     \sum_{k \leq n} \log^2 C_{\Omega}(k) = 
     \sum_{1 \leq j < k \leq n} 2 \log C_{\Omega}(j) \log C_{\Omega}(k).
\end{equation}
We define the sums 
\begin{align*}
E_{\Omega}(n) & := \frac{1}{n} \times \sum_{k \leq n} \log C_{\Omega}(k), 
     \mathtext{ and } V_{\Omega}(n) := \sqrt{\frac{1}{n} \times \sum_{k \leq n} \log^2 C_{\Omega}(k)}, 
     \mathtext{ for } n \geq 1.
\end{align*}
\begin{subequations}
\label{eqn_proof_tag_COmeganVariance_InitEqn_v1}
We have that 
\begin{align}
\label{eqn_proof_tag_COmeganVariance_InitEqn_v1_v1}
S_{2,\Omega}(n) & = n^2 E_{\Omega}^2(n) - n V_{\Omega}^2(n). 
\end{align}
The expansion on the right-hand-side of \eqref{eqn_proof_tag_S2OmeganSum_v1} is rewritten as 
\begin{align}
\label{eqn_proof_tag_COmeganVariance_InitEqn_v1_v2}
S_{2,\Omega}(n) & = 
     \sum_{1 \leq j < n} 2 \log C_{\Omega}(j) \left(
     n E_{\Omega}(n) - j E_{\Omega}(j)\right), \\ 
\notag 
     & = 2n^2 E_{\Omega}^2(n) - 2 \times \int_{e^e}^{n-1} t E_{\Omega}(t) \times \frac{d}{dt}\left[t E_{\Omega}(t)\right] dt, \\ 
\notag
     & = n^2 E_{\Omega}^2(n) - (2n-1) E_{\Omega}^2(n) \left(1 + O\left(\frac{1}{n}\right)\right). 
\end{align}
\end{subequations}
Equations \eqref{eqn_proof_tag_COmeganVariance_InitEqn_v1_v1} and 
\eqref{eqn_proof_tag_COmeganVariance_InitEqn_v1_v2} show that 
\begin{align}
\label{eqn_proof_tag_COmeganVariance_InitEqn_v2}
V_{\Omega}^2(n) &= 2 E_{\Omega}^2(n) \left(1 + O\left(\frac{1}{n}\right)\right), 
     \mathtext{ as } n \rightarrow \infty. 
\end{align} 
The variance of the function $\log C_{\Omega}(n)$ is given by
$\sigma_n^2 = V_{\Omega}^2(n) - E_{\Omega}^2(n)$.
Theorem \hlocalref{lemma_HatCAstxSum_ExactFormulaWithError_v1} applied to the right-hand-side of 
equation \eqref{eqn_proof_tag_COmeganVariance_InitEqn_v2} completes the proof. 
\end{proof}

\subsection{Remarks on other methods} 
\label{subSection_RemarksOnAvgOrderFor_COmegan_directly_SelbergDelangeMethod} 

Asymptotic formulae for the moments of the function $C_{\Omega}(n)$ on the 
positive integers $n \leq x$ as $x \rightarrow \infty$ are required 
to evaluate the average order of $|g(n)|$.  
Proofs to evaluate the centralized moments of the former function are not 
nearly as straightforward as the methods we used to establish 
Theorem \hlocalref{lemma_HatCAstxSum_ExactFormulaWithError_v1} and 
Proposition \hlocalref{prop_VarianceStat_for_COmegan_v1}. 
Let the parameters $k \in \mathbb{Z}^{+}$ and $z \in \mathbb{C}$ subject to 
$1 \leq k \leq R \log\log x$ and $|z| \leq M$ for some bounded
$0 < R, M < +\infty$ be fixed. 
An approach to the average order of $C_{\Omega}(n)$ 
invokes the Selberg-Delange method 
\cite[\S II.6.1]{TENENBAUM-PROBNUMT-METHODS} \cite[\S 7.4]{MV} 
in evaluating the partial sums of the form 
\[
\sum_{\substack{n \leq x \\ \Omega(n)=k}} \frac{(-1)^{\omega(n)} 
     C_{\Omega}(n) z^{2\Omega(n)}}{(\Omega(n))!}; \mathtext{ and } 
     \sum_{\substack{n \leq x \\ \Omega(n)=k}} \frac{(-1)^{\omega(n)} 
     C_{\Omega}(n)}{(\Omega(n))!} 
\]
We can extract the coefficients of $z^{2\Omega(n)}$ 
from the DGF expansions  
\[
\sum_{n \geq 1} \frac{C_{\Omega}(n)}{(\Omega(n))!} \cdot 
     \frac{(-1)^{\omega(n)} z^{\Omega(n)}}{n^s} = \prod_p \left(1 + \sum_{r \geq 1} 
     \frac{z^{\Omega(p^r)}}{r! p^{rs}}\right)^{-1} 
     = \exp\left(-z P(s)\right), \mathtext{ for } \Re(s) > 1. 
\]
A proof of the average order of the 
function $(-1)^{\omega(n)} C_{\Omega}(n)$ requires technical arguments 
that are non-trivial extensions of the proofs in 
\cite{MV,TENENBAUM-PROBNUMT-METHODS}. 
Integration by parts and the mean value theorem applied to the 
signed sums in Lemma \hlocalref{lemma_ConvenientIncGammaFuncTypePartialSumAsymptotics_v2} 
yield an approach to evaluating asymptotic formulae for the restricted partial sums of 
the function $C_{\Omega}(n)$ over the $n \leq x$ such that $\Omega(n) = k$ 
when $1 \leq k \leq R \log\log x$. 

\section{The function $g(n)$} 
\label{Section_NewFormulasForgInvn_v2} 

\subsection{Definitions}
\label{subSection_remark_MotivationForTheDefinitionOf_gn_v2}

\begin{definition}
\label{def_gn_and_Absgn_v2} 
For integers $n \geq 1$, we define the inverse function 
with respect to the operation of Dirichlet convolution by 
\[
g(n) = (\omega + \mathds{1})^{-1}(n), \mathtext{ for } n \geq 1. 
\]
The function $|g(n)|$ denotes the unsigned inverse function, 
or equivalently the absolute value of $g(n)$, for all integers $n \geq 1$. 
\end{definition}

\begin{remark}[Motivation]
Let $\chi_{\mathbb{P}}(n)$ denote the characteristic function of the primes, suppose that 
$\varepsilon(n) = \delta_{n,1}$ is the multiplicative identity 
with respect to Dirichlet convolution, and define the 
function $\mathds{1}(n)$ to be identically equal to one for all $n \geq 1$. 
We find that 
\begin{equation}
\label{eqn_AntiqueDivisorSumIdent} 
\chi_{\mathbb{P}} + \varepsilon = (\omega + \mathds{1}) \ast \mu. 
\end{equation} 
The result in \eqref{eqn_AntiqueDivisorSumIdent} follows by M\"obius inversion 
since $\mu \ast \mathds{1} = \varepsilon$ and 
\[
\omega(n) = \sum_{d|n} \chi_{\mathbb{P}}(d), \mathtext{ for } n \geq 1. 
\]
Recall the following statement of the 
inversion theorem of summatory functions for any 
Dirichlet invertible arithmetic function $\alpha(n)$ 
proved in \cite[\S 2.14]{APOSTOLANUMT}:
\begin{equation}
\label{eqn_ApostolStmt_ClassicSummatoryFuncInvThm_v1} 
G(x) = \sum_{n \leq x} \alpha(n) F\left(\frac{x}{n}\right) \implies 
     F(x) = \sum_{n \leq x} \alpha^{-1}(n) G\left(\frac{x}{n}\right), 
     \mathtext{ for } x \geq 1. 
\end{equation}
Hence, we may consider the inversion of the following partial sums to study the Mertens function: 
\[
\pi(x) + 1 = \sum_{n \leq x} \left(\chi_{\mathbb{P}} + \varepsilon\right)(n) = 
     \sum_{n \leq x} \sum_{d|n} (\omega(d) + 1) \mu\left(\frac{n}{d}\right), 
	\mathtext{ for } x \geq 1. 
\]
\end{remark}

\subsection{Signedness}
\label{Section_PrelimProofs_Config} 
\label{subSection_ProofOfSignednessOfgInvn_v1} 

\begin{prop}
\label{prop_SignageDirInvsOfPosBddArithmeticFuncs_v1} 
The sign of the function $g(n)$ is $\lambda(n)$ for all $n \geq 1$. 
\end{prop} 
\begin{proof} 
The series $D_f(s) := \sum_{n \geq 1} f(n) n^{-s}$ defines the 
Dirichlet generating function (DGF) of any 
arithmetic function $f$ which is convergent for all $s \in \mathbb{C}$ satisfying 
$\Re(s) > \sigma_f$ where $\sigma_f$ is the abscissa of convergence of the series. 
Recall that $D_{\mathds{1}}(s) = \zeta(s)$, $D_{\mu}(s) = \zeta(s)^{-1}$ and 
$D_{\omega}(s) = P(s) \zeta(s)$ for $\Re(s) > 1$. 
Whenever $f(1) \neq 0$ the DGF of $f^{-1}(n)$ is $D_f(s)^{-1}$. 
By equation \eqref{eqn_AntiqueDivisorSumIdent} we have 
\begin{align} 
\label{eqn_DGF_of_gInvn} 
D_{(\omega+1)^{-1}}(s) = \frac{1}{\zeta(s) (1+P(s))}, \mathtext{ for } \Re(s) > 1. 
\end{align} 
It follows that $(\omega + \mathds{1})^{-1}(n) = (h^{-1} \ast \mu)(n)$ for 
$h := \chi_{\mathbb{P}} + \varepsilon$. 
We first show that $\operatorname{sgn}(h^{-1}) = \lambda$ which implies that 
$\operatorname{sgn}(h^{-1} \ast \mu) = \lambda$. 

We recover exactly that \cite[\cf \S 2]{FROBERG-1968} 
\begin{equation} 
\notag 
h^{-1}(n) = \begin{cases} 
     1, & \text{ if $n = 1$; } \\ 
     \lambda(n) (\Omega(n))! \times \prod\limits_{p^{\alpha} || n} \frac{1}{\alpha!}, & 
     \text{ if $n \geq 2$. }
     \end{cases}
\end{equation} 
In particular, by expanding the DGF of 
$h^{-1}$ formally in powers of $P(s)$, where $|P(s)| < 1$ whenever $\Re(s) > 1.39944$, 
we count that 
\begin{align}
\notag
\frac{1}{1+P(s)} & = \sum_{n \geq 1} \frac{h^{-1}(n)}{n^s} = \sum_{k \geq 0} (-1)^k P(s)^k \\ 
\notag
     & = 
     1 + \sum_{\substack{n \geq 2 \\ n =p_1^{\alpha_1} \times \cdots \times p_k^{\alpha_k}}} 
     \frac{(-1)^{\alpha_1+\alpha_2+\cdots+\alpha_k}}{n^s} \times 
     \binom{\alpha_1+\alpha_2+\cdots+\alpha_k}{\alpha_1,\alpha_2,\ldots,\alpha_k} \\ 
\label{eqn_COmeganMultinomExp_as_DGFSeries_v2}
     & = 
     1 + \sum_{\substack{n \geq 2 \\ n =p_1^{\alpha_1} \times \cdots \times p_k^{\alpha_k}}} 
     \frac{\lambda(n)}{n^s} \times \binom{\Omega(n)}{\alpha_1,\alpha_2,\ldots,\alpha_k}. 
\end{align}
Since $\lambda$ is completely multiplicative we have that 
$\lambda\left(\frac{n}{d}\right) \lambda(d) = \lambda(n)$ for all divisors 
$d|n$ and $n \geq 1$. We also have that $\mu(n) = \lambda(n)$ 
whenever $n$ is squarefree. This yields 
\[
g(n) = (h^{-1} \ast \mu)(n) = \lambda(n) \times 
     \sum_{d|n} \mu^2\left(\frac{n}{d}\right) |h^{-1}(n)|, \mathtext{ for } n \geq 1. 
     \qedhere 
\]
\end{proof} 

\begin{remark}
The function $|h^{-1}(n)|$ from the last proof identically matches  
values of $C_{\Omega}(n)$ at all $n \geq 1$. 
The proof shows that the sequence $\lambda(n) C_{\Omega}(n)$ has 
DGF of $(1 + P(s))^{-1}$ for all $\Re(s) > 1$. We can easily extend the last proof to 
see that $C_{\Omega}(n)$ has DGF $(1-P(s))^{-1}$ for all $\Re(s) > 1$ 
(see Remark \hlocalref{remark_MiscConsequencesOfCorForFormulaOfUnsgInvnFunc_v2}). 
\end{remark}

\subsection{Relations to the function $C_{\Omega}(n)$} 
\label{Section_InvFunc_PreciseExpsAndAsymptotics} 
\label{subSection_Relating_CknFuncs_to_gInvn} 

\begin{lemma} 
\label{lemma_AnExactFormulaFor_gInvByMobiusInv_v1} 
For all $n \geq 1$ 
\[
g(n) = \sum_{d|n} \mu\left(\frac{n}{d}\right) \lambda(d) C_{\Omega}(d). 
\]
\end{lemma}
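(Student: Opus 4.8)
The plan is to derive the stated formula as a direct consequence of the convolution identity $\chi_{\mathbb{P}} + \varepsilon = (\omega + 1) \ast \mu$ recorded in \eqref{eqn_AntiqueDivisorSumIdent}, together with the explicit closed form for the inverse of $h := \chi_{\mathbb{P}} + \varepsilon$ already obtained in Proposition \hlocalref{prop_SignageDirInvsOfPosBddArithmeticFuncs_v1}. Writing $g := \omega + 1$, the content of the lemma is exactly the assertion that $g^{-1} = h^{-1} \ast \mu$ once we substitute $h^{-1}(d) = \lambda(d) C_{\Omega}(d)$, so the argument reduces to a short sequence of Dirichlet-convolution manipulations.

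First I would invert the identity \eqref{eqn_AntiqueDivisorSumIdent}. Since $h(1) = g(1) = \mu(1) = 1 \neq 0$, all three functions are Dirichlet invertible, and the inverse of a convolution is the convolution of the inverses; using $\mu^{-1} = 1$ (equivalently $\mu \ast 1 = \varepsilon$), the relation $h = g \ast \mu$ yields $h^{-1} = g^{-1} \ast 1$. Explicitly this reads $h^{-1}(n) = \sum_{d \mid n} g^{-1}(d)$, an equation in exactly the form required to apply M\"obius inversion. Inverting this divisor sum gives $g^{-1}(n) = \sum_{d \mid n} \mu\!\left(\tfrac{n}{d}\right) h^{-1}(d)$; equivalently, convolving $h^{-1} = g^{-1} \ast 1$ on both sides by $\mu$ and collapsing $1 \ast \mu = \varepsilon$ produces $g^{-1} = h^{-1} \ast \mu$ directly. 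One can also read this off the DGF identity \eqref{eqn_DGF_of_gInvn}, factoring $\frac{1}{\zeta(s)(1 + P(s))}$ as the product of the DGF $\zeta(s)^{-1}$ of $\mu$ with the DGF $(1 + P(s))^{-1}$ of $h^{-1}$.

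It then remains only to substitute the closed form $h^{-1}(d) = \lambda(d) C_{\Omega}(d)$, valid for all $d \geq 1$ by \eqref{eqn_proof_tag_hInvn_ExactNestedSumFormula_CombInterpetIdent_v3} and the convention $C_{\Omega}(1) = 1$, into $g^{-1}(n) = \sum_{d \mid n} \mu\!\left(\tfrac{n}{d}\right) h^{-1}(d)$, which gives precisely the claimed expression $g^{-1}(n) = \sum_{d \mid n} \mu\!\left(\tfrac{n}{d}\right) \lambda(d) C_{\Omega}(d)$. The only genuine obstacle lies upstream, in the identification of $h^{-1}$ itself: establishing that $(1 + P(s))^{-1}$ is the DGF of $\lambda(n) C_{\Omega}(n)$ requires the geometric expansion in powers of $P(s)$ and the multinomial count of ordered prime-exponent configurations carried out in Proposition \hlocalref{prop_SignageDirInvsOfPosBddArithmeticFuncs_v1}. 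Granting that computation, the present lemma is a formal inversion, and I would take care only to confirm that every convolution inverse invoked exists (guaranteed by the nonvanishing at $n = 1$) and that the base case is consistent, since there $g^{-1}(1) = 1 = \mu(1)\lambda(1)C_{\Omega}(1)$.
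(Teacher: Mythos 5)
Your proof is correct, but it takes a genuinely different route from the paper's. The paper never invokes Proposition \hlocalref{prop_SignageDirInvsOfPosBddArithmeticFuncs_v1} here: it works directly from the defining recursion of the Dirichlet inverse, rewriting it as $(g^{-1} \ast 1)(n) = -(\omega \ast g^{-1})(n)$ and unfolding that identity inductively through depths $1 \leq m \leq \Omega(n)$, so that at maximal depth $m = \Omega(n)$ only the divisor $d = n$ contributes and $(g^{-1} \ast 1)(n) = (-1)^{\Omega(n)} C_{\Omega}(n) = \lambda(n) C_{\Omega}(n)$, with $C_{\Omega}(n) := C_{\Omega(n)}(n)$ the iterated-convolution function of \eqref{eqn_CknFuncDef_v2}; M\"obius inversion then finishes. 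You instead invert \eqref{eqn_AntiqueDivisorSumIdent} to get $h^{-1} = g^{-1} \ast 1$, M\"obius-invert to $g^{-1} = h^{-1} \ast \mu$, and substitute the closed form of $h^{-1}$ from Proposition \hlocalref{prop_SignageDirInvsOfPosBddArithmeticFuncs_v1}. That is shorter and sound as convolution algebra, but it glosses over one dependency: in Section \hlocalref{Section_InvFunc_PreciseExpsAndAsymptotics}, where the lemma is stated, $C_{\Omega}(n)$ means the $\Omega(n)$-fold convolution of $\omega$ from \eqref{eqn_CknFuncDef_v2}, whereas \eqref{eqn_proof_tag_hInvn_ExactNestedSumFormula_CombInterpetIdent_v3} expresses $|h^{-1}(n)|$ as the multinomial $(\Omega(n))! \prod_{p^{\alpha} || n} (\alpha!)^{-1}$; the paper merely \emph{adopts as notation} that these two agree, and it is precisely its own inductive proof of this lemma (combined with $h^{-1} = g^{-1} \ast 1$) that actually certifies the agreement, so quoting the identification risks a mild circularity. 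To make your argument self-contained, add the one-line combinatorial check: in $C_k(n) = \sum_{d_1 \cdots d_k = n} \omega(d_1) \cdots \omega(d_k)$ with $k = \Omega(n)$, any nonzero term has every $d_i > 1$, hence $\Omega(d_i) \geq 1$, and $\sum_i \Omega(d_i) = \Omega(n)$ forces each $d_i$ to be prime with $\omega(d_i) = 1$, so $C_{\Omega(n)}(n)$ counts ordered prime factorizations of $n$ and equals $(\Omega(n))! \prod_{p^{\alpha} || n} (\alpha!)^{-1}$. With that line your proof is complete, and arguably cleaner; what the paper's longer induction buys is that it works with the convolution definition throughout and never needs the DGF/multinomial computation of Proposition \hlocalref{prop_SignageDirInvsOfPosBddArithmeticFuncs_v1} at all.
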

\begin{proof} 
We expand the recurrence relation for the Dirichlet inverse 
with $g(1) = 1$ as 
\begin{align} 
\label{eqn_proof_tag_gInvCvlOne_EQ_omegaCvlgInvCvl_v1} 
g(n) & = - \sum_{\substack{d|n \\ d>1}} (\omega(d) + 1) g\left(\frac{n}{d}\right) 
     \quad\implies\quad 
     (g \ast 1)(n) = -(\omega \ast g)(n). 
\end{align} 
For $1 \leq m \leq \Omega(n)$, we can inductively expand the 
implication on the right-hand-side of \eqref{eqn_proof_tag_gInvCvlOne_EQ_omegaCvlgInvCvl_v1} 
in the form of $(g \ast 1)(n) = G_m(n)$ where 
$$G_m(n) := (-1)^m \times \sum_{d|n} C_m(d) g\left(\frac{n}{d}\right), \mathtext{ for } 1 \leq m \leq \Omega(n),$$ 
is expanded recursively as 
\[
G_m(n) = - 
     \begin{cases} 
     (\omega \ast g)(n), & m = 1; \\ 
     \sum\limits_{\substack{d|n \\ d > 1}} G_{m-1}(d) \times \sum\limits_{\substack{r|\frac{n}{d} \\ r > 1}} 
     \omega(r) g\left(\frac{n}{dr}\right), & 2 \leq m \leq \Omega(n); \\ 
     0, & \text{otherwise.} 
     \end{cases} 
\]
When $n \geq 2$ and $m := \Omega(n)$, i.e., with these expansions 
carried out to a maximal depth, we obtain 
\begin{equation} 
\label{eqn_proof_tag_gInvCvlOne_EQ_omegaCvlgInvCvl_v2} 
(g \ast 1)(n) = \lambda(n) C_{\Omega}(n). 
\end{equation} 
The formula follows from equation \eqref{eqn_proof_tag_gInvCvlOne_EQ_omegaCvlgInvCvl_v2} 
by M\"obius inversion. 
\end{proof} 

\begin{proof}[Proof of Proposition \hlocalref{lemma_AbsValueOf_gInvn_FornSquareFree_v1}] 
The result follows from 
Lemma \hlocalref{lemma_AnExactFormulaFor_gInvByMobiusInv_v1}, 
Proposition \hlocalref{prop_SignageDirInvsOfPosBddArithmeticFuncs_v1} and the 
complete multiplicativity of $\lambda(n)$.  
Since $\mu(n)$ is non-zero only at squarefree integers and since 
at any squarefree $d \geq 1$ we have $\mu(d) = (-1)^{\omega(d)} = \lambda(d)$, we have 
\begin{align*} 
|g(n)| & = \lambda(n) \times \sum_{d|n} \mu\left(\frac{n}{d}\right) \lambda(d) C_{\Omega}(d) \\ 
     & = \lambda(n^2) \times \sum_{d|n} \mu^2\left(\frac{n}{d}\right) C_{\Omega}(d). 
\end{align*} 
The leading term $\lambda(n^2) = 1$ for all $n \geq 1$ since the number of distinct 
prime factors (counting multiplicity) of any square integer is even. 
\end{proof} 

\begin{remark}
\label{remark_MiscConsequencesOfCorForFormulaOfUnsgInvnFunc_v2} 
The following are consequences of 
Proposition \hlocalref{lemma_AbsValueOf_gInvn_FornSquareFree_v1}: 
\begin{subequations}
\begin{itemize}
\item 
Whenever $n \geq 1$ is squarefree 
\begin{equation}
|g(n)| = \sum_{d|n} C_{\Omega}(d). 
\end{equation}
Since all divisors of a squarefree integer are squarefree, 
for all squarefree $n \geq 1$, we have that 
\begin{equation}
\label{eqn_PropB_lemma_gInv_MxExample} 
|g(n)| = \sum_{m=0}^{\omega(n)} \binom{\omega(n)}{m} \times m!. 
\end{equation}
\item 
The formula in \eqref{eqn_AbsValueOf_gInvn_FornSquareFree_v1} shows that 
the DGF of the unsigned inverse function $|g(n)|$ 
is given by the meromorphic function 
$\zeta(2s)^{-1} (1-P(s))^{-1}$ for all $s \in \mathbb{C}$ with $\Re(s) > 1$. 
This DGF has a pole to the right of the line at $\Re(s) = 1$ 
at the unique real $\sigma \approx 1.39943$ 
such that $P(\sigma) = 1$ along the reals $\sigma > 1$. 
\item 
The average order of $|g(n)|$ is given by 
\cite[\S 18.6]{HARDYWRIGHT}
\begin{align}
\frac{1}{x} \times \sum_{n \leq x}  |g(n)| & = \frac{1}{x} \times \sum_{n \leq x} 
     C_{\Omega}(n) \left(\sum_{k \leq \Floor{x}{n}} \mu^2(k)\right) 
     = \frac{6}{\pi^2} \times \sum_{n \leq x} \frac{C_{\Omega}(n)}{n}\left( 
     1 + O\left(\sqrt{\frac{n}{x}}\right)\right). 
\end{align}
\end{itemize}
\end{subequations}
\end{remark}

\section{The distribution of the function $\log C_{\Omega}(n)$} 
\label{subSection_ErdosKacTheorem_Analogs} 

In this section, we motivate and prove a central limit theorem 
for the distribution of the function $\log C_{\Omega}(n)$. 

\subsection{Proof of Theorem \hlocalref{conj_DetFormOfEKTypeThmForCOmegan_v1}}

\begin{proof}
We outline the next steps to complete the proof of this result: 
\begin{itemize}
\item
Given a fixed $x \geq 1$, we select another integer $N \equiv N(x)$ uniformly at random from 
$\{1,2,\ldots,x\}$. For each prime $p$ we define 
\[
C_p^{(x)} := \begin{cases} 0, & p \nmid N(x); \\ 
	\alpha, & p^{\alpha} || N(x), \text{ for some } \alpha \geq 1. 
	\end{cases}
\]
For primes $p$ as $x \rightarrow \infty$, 
we have the limiting convergence in distribution of 
$C_p^{(x)} \overset{d}{\implies} Z_p$ where $Z_p$ is 
a geometric random variable with parameter $p^{-1}$ 
\citep[\S 1.2]{LOG-COMB-STRUCTS-BOOK}. 
In other words, for any prime $p$ and $k \geq 1$ we have that 
\[
\lim_{x \rightarrow \infty} \mathbb{P}\left[C_p^{(x)} = k\right] = 
     \left(1 - \frac{1}{p}\right)\left(\frac{1}{p}\right)^k. 
\]
\item 
For $n \geq 1$, we use 
equation \eqref{eqn_proof_tag_hInvn_ExactNestedSumFormula_CombInterpetIdent_v3} and 
Binet's log-gamma formula \cite[\S 5.9(i)]{NISTHB} to show that 
\begin{align}
\notag
\log C_{\Omega}(n) & = \log (\Omega(n))! - 
	\sum_{\substack{p^{\alpha}||n \\ \alpha \geq 2}} \log(\alpha!) \\ 
\label{eqn_remark_LogCOmegan_IntermedExp_v1}
	& = \Omega(n) \log \Omega(n) - \sum_{\substack{p^{\alpha}||n \\ \alpha \geq 2}} 
	\alpha \log(1 + \alpha) + O(\Omega(n)). 
\end{align}
Since $\Omega(n) = 1$ only for $n$ within a subset of the positive 
integers with asymptotic density of zero (i.e., at prime $n$), 
it suffices to restrict our considerations 
to the cases where $\Omega(n) \geq 2$. 
\item 
For $x \geq 2$, let 
\begin{align*}
\Theta_{N(x)} & := \Omega(N(x)) \log \Omega(N(x)), \\ 
A_{N(x)} & := \sum_{p \leq x} C_p^{(x)} \log C_p^{(x)} \times \mathds{1}_{\{C_p^{(x)} \geq 2\}}(p). 
\end{align*}
We can write the expansion from equation 
\eqref{eqn_remark_LogCOmegan_IntermedExp_v1} as the difference 
$$\log C_{\Omega}(N(x)) := \Theta_{N(x)} - A_{N(x)} + O(1), \mathtext{ as } x \rightarrow \infty.$$ 
Moreover, we can show that as $x \rightarrow \infty$ 
\[
\mathbb{E}[A_{N(x)}] \ll \sum_{p \leq x} \mathbb{E}\left[C_p^{(x)} \log C_p^{(x)}\right] 
     \times \mathbb{P}\left[C_p^{(x)} \geq 2\right] = o\left(\mathbb{E}[\Theta_{N(x)}]\right). 
\]
Analogous bounds can be proved to relate the variance of these 
two random variables as $x \rightarrow \infty$. 
\item 
Let $\mu_x := \mathbb{E}\left[\log C_{\Omega}(x)\right]$ and 
$\sigma_x^2 := \operatorname{Var}\left[\log C_{\Omega}(x)\right]$ be defined as in 
Definition \hlocalref{def_AvgOrder_FirstAndSecondMomentsOfFuncs_v1}. 
For $1 \leq n \leq x$, let the indicator random variable $\chi_{x,n}$ 
be defined as follows: $\chi_{x,n} := \mathds{1}_{\{N(x)=n\}}$. 
For $x \geq 1$, we define 
\[
S_x := \sum_{1 \leq n \leq x} \log C_{\Omega}(n) \chi_{x,n}. 
\]
We calculate that 
\[
\mathbb{E}[S_x] = \mu_x; \operatorname{Var}[S_x] = \sigma_x^2; 
     \mathtext{ and } 
     \hat{\mu}_{x,n} := \mathbb{E}\left[\log C_{\Omega}(n) \chi_{x,n}\right] = 
     \frac{1}{x} \times \log C_{\Omega}(n), 
     \mathtext{ for integers } 1 \leq n \leq x. 
\]
\item 
For fixed $\varepsilon > 0$ and large $x$, let 
\[
\widetilde{E}_{\Omega}(\varepsilon, x) := \frac{1}{\sigma_x^2} \times 
     \sum_{1 \leq n \leq x} \mathbb{E}\left[ 
     \left(\log C_{\Omega}(n) \chi_{x,n} - \hat{\mu}_{x,n}\right)^2 
     \times 
     \mathds{1}_{\{\left\lvert \log C_{\Omega}(n) \chi_{x,n} - \hat{\mu}_{x,n} \right\rvert > 
     \varepsilon \sigma_x\}}\right]. 
\]
We say that the Lindeberg condition is satisfied when the following 
is true for every fixed $\varepsilon > 0$: 
\begin{equation}
\label{eqn_ProofTag_LindebergCLTCond_Stmt_v1}
\lim_{x \rightarrow \infty}\ \widetilde{E}_{\Omega}(\varepsilon, x) = 0. 
\end{equation}
Whenever equation \eqref{eqn_ProofTag_LindebergCLTCond_Stmt_v1} holds for all $\varepsilon > 0$, 
we can apply the Lindeberg central limit theorem (CLT). This result and 
Theorem \hlocalref{lemma_HatCAstxSum_ExactFormulaWithError_v1} and 
Proposition \hlocalref{prop_COmeganFunc_Variance_v1} show that we have the 
convergence in distribution to a standard normal random variable given as follows 
\cite[\S 27]{BILLINGSLY-PROB-AND-MEASURE-BOOK}: 
\begin{equation}
\label{eqn_ProofTag_LindebergCLT_Conclusion_Stmt_v2}
\lim_{x \rightarrow \infty} 
     \mathbb{P}\left[\frac{S_x - \mu_x}{\sigma_x} \leq z\right] = 
	\Phi\left(z\right), \mathtext{ for any } z \in (-\infty, \infty).
\end{equation}
\item 
For $x,y \in [0, \infty)$, the function $W(y)$ 
denotes the principal branch of the multi-valued 
Lambert $W$-function on the non-negative reals defined such that 
$x = W(y)$ if and only if $xe^{x} = y$.
For any $M > 0$, the condition that $k\log k > M \sigma_x$ is true when 
$k > \frac{M \sigma_x}{W\left(M \sigma_x\right)} \sim M \log\log x$ 
as $x \rightarrow \infty$ \cite{LAMBERT-WFUNC-KNUTH}. 
The inequality $t \log(1+t) \geq \left(t + \frac{1}{2}\right) \log(1+t) - t$ 
is satisfied for all real $t > 0$. 
\item 
Suppose that $\varepsilon \in (0, 1)$ is fixed and 
let $\mathcal{I}(\varepsilon, x) := \left[(1-\varepsilon) \log\log x, (1+\varepsilon) \log\log x \right] \bigcap \mathbb{Z}^{+}$. 
We have that 
\begin{align} 
\notag
\widetilde{E}_{\Omega}(\varepsilon, x) & \ll 
     \frac{1}{\sigma_x^2} \times \sum_{n \leq x} \mathbb{E}\left[
     \log^2 C_{\Omega}(n) \left(\chi_{x,n} - \frac{1}{x}\right)^2 
     \mathds{1}_{\left\{\left\lvert S_x - \mu_x \right\rvert > \varepsilon\sigma_x\right\}}
     \right] \\ 
\notag
     & \ll \frac{1}{\sigma_x^2} \times \sum_{k \in \mathcal{I}(\varepsilon, x)} 
     \log^2(k!) \times \mathbb{P}\left[\Omega(N(x)) = k\right] \times \sum_{n \leq x} 
     \mathbb{E}\left(\chi_{x,n} - \frac{1}{x}\right)^2 \\ 
\label{eqn_ProofTag_CLT_VerifyLindebergCondition_v1_v3}
     & \ll \frac{1}{x \sigma_x^2} \times \sum_{k \in \mathcal{I}(\varepsilon, x)} 
     \log^2(k!) \times \mathbb{P}\left[\Omega(N(x)) = k\right] \times \left(1 - \frac{1}{x}\right). 
\end{align}
Proposition \hlocalref{prop_VarianceStat_for_COmegan_v1}, 
Theorem \hlocalref{theorem_MV_Thm7.20-init_stmt} and 
equation \eqref{eqn_ProofTag_CLT_VerifyLindebergCondition_v1_v3} show that 
\begin{align*}
\widetilde{E}_{\Omega}(\varepsilon, x) & \ll 
      \left\lvert (\log x)^{-\varepsilon-(1-\varepsilon) \log(1-\varepsilon)} - 
      (\log x)^{\varepsilon-(1+\varepsilon) \log(1+\varepsilon)} \right\rvert. 
\end{align*}
The function $f_{\pm}(t) := \pm t - (1 \pm t) \log(1 \pm t)$ is strictly negative and 
monotone decreasing for $t \in (0, 1)$. 
It follows that equation \eqref{eqn_ProofTag_LindebergCLTCond_Stmt_v1} 
is satisfied for all $\varepsilon \in (0, 1)$. 
We can modify the argument given above using equation \eqref{eqn_proof_tag_PartialSumsOver_HatCkx_EquivCond_v2} 
from the appendix to show that 
\eqref{eqn_ProofTag_LindebergCLTCond_Stmt_v1} is also satisfied 
whenever $\varepsilon \in [1, \infty)$. 
We conclude from the Lindeberg CLT that 
\eqref{eqn_ProofTag_LindebergCLT_Conclusion_Stmt_v2} holds. 
\qedhere
\end{itemize}
\end{proof}

\subsection{Motivation}

\begin{remark}
\label{remark_COmegaFuncDistIntutitionFromErdosKac_v1}
For $n \geq 2$, let the function 
$\mathcal{E}[n] := (\alpha_1, \ldots, \alpha_r)$ denote the unordered 
partition of exponents ($r$-tuple) for which $\omega(n) = r$ and 
$n = p_1^{\alpha_1} \times \cdots \times p_r^{\alpha_r}$ is the factorization of 
$n$ into powers of distinct primes. 
For any $n_1,n_2 \geq 2$ 
\begin{equation}
\label{eqn_FactSymmPropertyOfgn_v1} 
\mathcal{E}[n_1] = \mathcal{E}[n_2] \implies C_{\Omega}(n_1) = C_{\Omega}(n_2) \mathtext{ and } 
	g(n_1) = g(n_2). 
\end{equation}
This property shows that there is a deep structure to these functions connected to the 
prime divisors of the positive integers $n \geq 2$. 
On the other hand, since the multiplicative function $\mu^2(n)$ and the 
strongly additive functions $\omega(n)$ and $\Omega(n)$ also satisfy their 
analog to equation \eqref{eqn_FactSymmPropertyOfgn_v1}. Thus, 
this property alone is insufficient to predict the CLT type 
result for functions of this type we see in 
Theorem \hlocalref{conj_DetFormOfEKTypeThmForCOmegan_v1}. 
\end{remark}

More intuition about why the distribution of $\log C_{\Omega}(n)$ 
obeys a limiting probabilistic model is heuristic: 

\begin{remark}
By definition the function $C_{\Omega}(n)$ is identified with the $\Omega(n)$-fold 
Dirichlet convolution of the strongly additive $\omega(n)$ with itself via 
Definition \hlocalref{eqn_CknFuncDef_v2}. 
This perspective provides more insight into why we should expect to find a limiting distribution 
associated with the distinct values of 
$C_{\Omega}(n)$ over $n \leq x$ (pointwise) and of $\log C_{\Omega}(n)$ over $n \leq x$ 
(smoothly via Theorem \hlocalref{conj_DetFormOfEKTypeThmForCOmegan_v1}). 
In particular, we associate the tendency of $\omega(n)$ towards its average order 
with the Erd\H{o}s-Kac theorem 
(\cf Appendix \hlocalref{subSection_TheKnownDistsOfThePrimeOmegaFunctions_IntroResults_v1}) 
\[
\lim_{x \rightarrow \infty}\ \frac{1}{x} \times \#\left\{n \leq x: 
     \frac{\omega(n) - \log\log x}{\sqrt{\log\log x}} \leq z\right\} = \Phi(z), 
	\mathtext{ for any } z \in (-\infty, \infty). 
\]
In the sense that multiple (Dirichlet) convolutions 
reflect a qualitative smoothing operation on average, the CLT statement for $\omega(n)$ above 
should predict a smooth limiting distribution. 
Incidentally, equation \eqref{eqn_proof_tag_hInvn_ExactNestedSumFormula_CombInterpetIdent_v3} 
shows that the normalized function $\frac{C_{\Omega}(n)}{(\Omega(n))!}$ is multiplicative 
(\cf \cite{ELLIOTT-V1}). 
\end{remark}

\section{Applications to the Mertens function} 
\label{Section_KeyApplications} 
\label{Section_KeyApplications_NewExactFormulasForMx_FullSectionLabel} 

In this section, we prove Theorem \hlocalref{prop_Mx_SBP_IntegralFormula}. 
The new formulas precisely connect the Mertens function with partial sums of 
positive unsigned arithmetic functions whose summands are 
weighted by the sign of $\lambda(n)$.  

\begin{definition}
\label{def_GInvAndGInvAbs_SummFuncs_v2}
\begin{subequations}
The summatory functions of $g(n)$ and $|g(n)|$, respectively, are 
defined for all $x \geq 1$ by the partial sums 
\begin{align*}
G(x) & := \sum_{n \leq x} g(n) = \sum_{n \leq x} \lambda(n) |g(n)|, 
	\mathtext{ and } 
	|G|(x) := \sum_{n \leq x} |g(n)|. 
\end{align*}
\end{subequations}
\end{definition}

\begin{figure}[ht!]

\captionsetup{singlelinecheck=off}
\centering

\begin{subfigure}[t!]{\PlotFigureHorizontalScalingFactor\textwidth}
\fbox{\includegraphics[width=\textwidth]{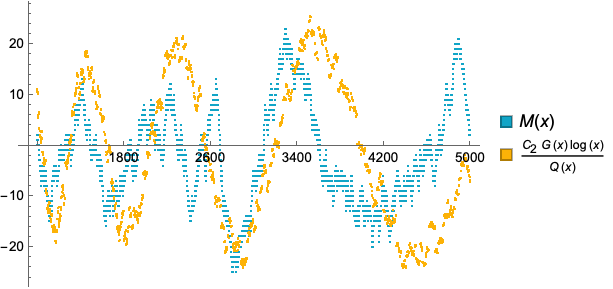}}
\captionsetup{justification=centering}
\caption{}
\end{subfigure}

\smallskip

\begin{subfigure}[t!]{\PlotFigureHorizontalScalingFactor\textwidth}
\fbox{\includegraphics[width=\textwidth]{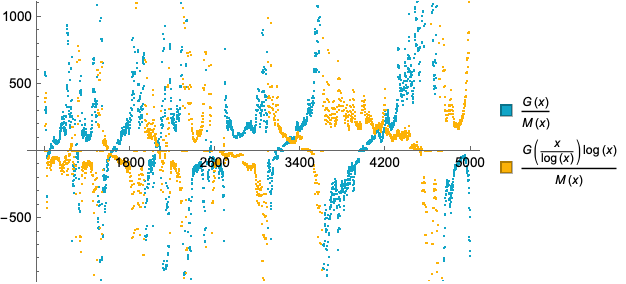}}
\captionsetup{justification=centering}
\caption{}
\end{subfigure}

\captionsetup{justification=centering}
\caption{} 
\label{figure_MxAndNewAuxPartialSums_Comparison_Intro_v2_v1} 

\end{figure} 

\begin{figure}[ht!]

\captionsetup{singlelinecheck=off}
\centering

\begin{subfigure}[t!]{\PlotFigureHorizontalScalingFactor\textwidth}
\fbox{\includegraphics[width=\textwidth]{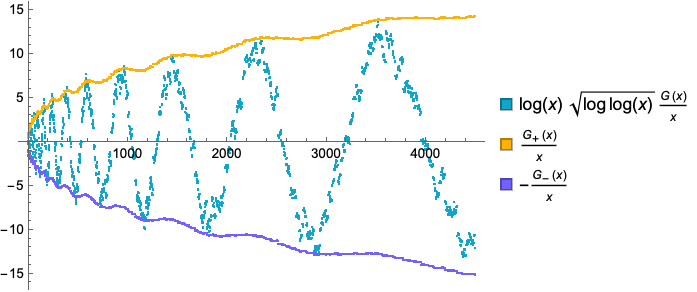}}
\captionsetup{justification=centering}
\caption{}
\end{subfigure}

\smallskip

\begin{subfigure}[t!]{\PlotFigureHorizontalScalingFactor\textwidth}
\fbox{\includegraphics[width=\textwidth]{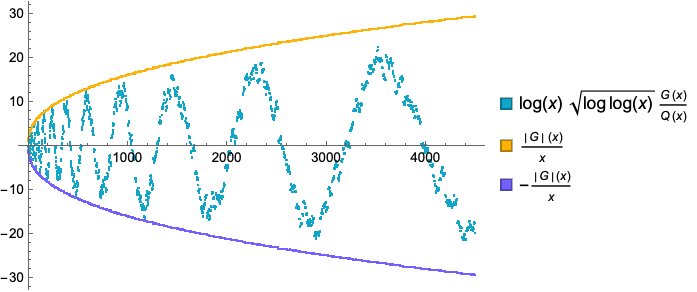}}
\captionsetup{justification=centering}
\caption{}
\end{subfigure}

\captionsetup{justification=centering}
\caption{}
\label{figure_MxAndNewAuxPartialSums_Comparison_Intro_v2_v2} 

\end{figure} 

\subsection{Proofs of the new formulas} 
\label{subSection_KeyApplications_NewExactFormulasForMx} 

\begin{proof}[Proof of 
              \eqref{prop_Mx_SBP_IntegralFormula_PartA} and \eqref{prop_Mx_SBP_IntegralFormula_PartB} of 
              Theorem \hlocalref{prop_Mx_SBP_IntegralFormula}] 
By applying Theorem \hlocalref{theorem_SummatoryFuncsOfDirCvls} to 
equation \eqref{eqn_AntiqueDivisorSumIdent} we have that 
\begin{align} 
\notag
M(x) & = \sum_{k=1}^{x} \left(1 + \pi\left(\Floor{x}{k}\right)\right) g(k) \\ 
\notag 
     & = G(x) + \sum_{k=1}^{\frac{x}{2}} \pi\left(\Floor{x}{k}\right) g(k) \\ 
\notag 
     & = G(x) + G\left(\Floor{x}{2}\right) + 
     \sum_{k=1}^{\frac{x}{2}-1} \left( 
     \pi\left(\Floor{x}{k}\right) - \pi\left(\Floor{x}{k+1}\right) 
	\right) \times G(k).
\end{align} 
The upper bound on the sum is truncated to $k \in \left[1, \frac{x}{2}\right]$ in the second equation 
above because $\pi(1) = 0$. 
The third formula above follows directly by summation by parts. 
\end{proof} 
\begin{proof}[Proof of \eqref{eqn_RmkInitialConnectionOfMxToGInvx_ProvedByInversion_v1} of 
	      Theorem \hlocalref{prop_Mx_SBP_IntegralFormula}]
Lemma \hlocalref{lemma_AnExactFormulaFor_gInvByMobiusInv_v1} shows that 
\[
G(x) = \sum_{d \leq x} \lambda(d) C_{\Omega}(d) M\left(\Floor{x}{d}\right). 
\]
The identity in \eqref{eqn_AntiqueDivisorSumIdent} implies 
$$\lambda(d) C_{\Omega}(d) = (g \ast 1)(d) = (\chi_{\mathbb{P}} + \varepsilon)^{-1}(d).$$ 
We recover the stated result from the classical inversion of summatory functions in 
equation \eqref{eqn_ApostolStmt_ClassicSummatoryFuncInvThm_v1}. 
\end{proof}

\subsection{Discrete plots and numerical experiments}

The plots shown in the figures in this section compare 
the values of $M(x)$ and $G(x)$ with related auxiliary partial sums. 
These plots showcase interesting phenomena observed for small $x$: 
\begin{itemize}

\item In Figure \hlocalref{figure_MxAndNewAuxPartialSums_Comparison_Intro_v2_v1}, 
      we plot comparisons of $M(x)$ to scaled forms of $G(x)$ for $x \leq 5000$. The 
      absolute constant is $C_2 := \frac{\pi^2}{6}$. The partial sums defined by the function 
      $Q(x) := \sum_{n \leq x} \mu^2(n)$ count the number of squarefree integers $1 \leq n \leq x$. 
      In \textbf{(a)} the shift to the left on the $x$-axis of the former function 
      is compared and seen to be similar in shape to the magnitude of $M(x)$ on this initial subinterval. 
      It is unknown whether the similar shape and magnitude of these two functions persists for 
      much larger $x$. 
      In \textbf{(b)} we have observed unusual reflections and symmetry between the two ratios plotted in the 
      figure. We have numerically modified the plot values to shift the denominators of 
      $M(x)$ by one at each $x \leq 5000$ for which $M(x) = 0$. 

\item In Figure \hlocalref{figure_MxAndNewAuxPartialSums_Comparison_Intro_v2_v2}, we compare 
      envelopes on the logarithmically scaled values of $G(x) x^{-1}$ to other variants of 
      the partial sums of $g(n)$ for $x \leq 4500$. 
      In \textbf{(a)} we define $G(x) := G_{+}(x) - G_{-}(x)$ where the functions 
      $G_{+}(x) \geq 0$ and $G_{-}(x) \geq 0$ for all $x \geq 1$, 
      i.e., the signed component functions $G_{\pm}(x)$ 
      denote the unsigned contributions of only those summands 
      $|g(n)|$ over $n \leq x$ where $\lambda(n) = \pm 1$, respectively. 
      The summatory function $Q(x)$ 
      in \textbf{(b)} has the same definition as in 
      Figure \hlocalref{figure_MxAndNewAuxPartialSums_Comparison_Intro_v2_v1} above. 
      This plot suggests that for large $x$ 
      \[
      |G(x)| \ll \frac{|G|(x)}{(\log x) \sqrt{\log\log x}} = 
           \frac{1}{(\log x) \sqrt{\log\log x}} \times \sum_{n \leq x} |g(n)|, 
           \mathtext{ as } x \rightarrow \infty.
      \]

\end{itemize}

\section{Conclusions}

\subsection{Summary}

We have identified a sequence, 
$\{g(n)\}_{n \geq 1}$, that is the Dirichlet inverse of the 
shifted strongly additive function $\omega(n)$. 
There is a natural structure of the repetition of distinct values 
of $|g(n)|$ that depends on the configuration of the 
exponents of the distinct primes in the factorization of any $n \geq 2$. 
The definition of this auxiliary sequence provides new relations between the 
summatory function $G(x)$ to $M(x)$ and $L(x)$. 
The sign of $g(n)$ is given by $\lambda(n)$ for all $n \geq 1$. 
The distributions of the unsigned functions $C_{\Omega}(n)$ and $|g(n)|$ 
provide new information about $M(x)$ via the formulas proved in 
Theorem \hlocalref{prop_Mx_SBP_IntegralFormula}. These formulas are 
expressed in terms of weighted partial sums with terms signed by $\lambda(n)$

\subsection{Discussion of the new results}

\subsubsection{Randomized models of the M\"obius function}

Some natural probabilistic models of the 
M\"obius function lead us to consider the behavior of $M(x)$ 
as a sum of independent and identically distributed (i.i.d.) random variables. 
Suppose that $\{X_n\}_{n \geq 1}$ is a sequence of i.i.d. 
$\{-1,0,1\}$-valued random variables 
such that for all $n \geq 1$ 
$$\mathbb{P}[X_n = -1] = \mathbb{P}[X_n = +1] = \frac{3}{\pi^2}, 
  \mathtext{ and } \mathbb{P}[X_n = 0] = 1 - \frac{6}{\pi^2}.$$ 
That is, the sequence $\{X_n\}_{n \geq 1}$ provides a randomized model of the 
values of $\mu(n)$ on average. 
We may approximate limiting properties of the partial sums as 
$M(x) \cong \widebar{S}_x$ where $\widebar{S}_x := \sum_{n \leq x} X_n$ for all $x \geq 1$. 
This viewpoint models predictions of certain limiting asymptotic behavior of the 
Mertens function such as \cite[Thm.~9.4; \S 9]{BILLINGSLY-PROB-AND-MEASURE-BOOK}
\[
\mathbb{E}\left[\widebar{S}_x\right] = 0, 
     \operatorname{Var}\left[\widebar{S}_x\right] = \frac{6x}{\pi^2}, 
     \mathtext{ and } 
     \limsup_{x \rightarrow \infty} \frac{\left\lvert \widebar{S}_x\right\rvert}{\sqrt{x \log\log x}} = 
     \frac{2\sqrt{3}}{\pi} \mathtext{ (almost surely).} 
\]

\subsubsection{Comparison of known formulas for $M(x)$ involving $\lambda(n)$}

The Mertens function is related to the partial sums in 
\eqref{eqn_LxSummatoryFuncDef_v1} 
via the relation \cite{HUMPHRIES-JNT-2013,LEHMAN-1960} 
\begin{equation}
\label{eqn_MxInTermsOfLx_v1} 
M(x) = \sum_{d \leq \sqrt{x}} \mu(d) L\left(\Floor{x}{d^2}\right), \mathtext{ for } x \geq 1.
\end{equation}
The relation in \eqref{eqn_MxInTermsOfLx_v1} 
gives an exact expression for $M(x)$ with summands involving $L(x)$ that are oscillatory. 
In contrast, the exact expansions for the Mertens function given in 
Theorem \hlocalref{prop_Mx_SBP_IntegralFormula} 
express $M(x)$ as finite sums over $\lambda(n)$ with weighted coefficients that are unsigned. 
The property of the symmetry of the distinct values of $|g(n)|$ with respect to the 
prime factorizations of $n \geq 2$ in equation \eqref{eqn_FactSymmPropertyOfgn_v1} 
suggests that the unsigned weights on $\lambda(n)$ in 
the new formulas from the theorem may yield new insights compared to formulas like 
equation \eqref{eqn_MxInTermsOfLx_v1}. 

\subsubsection{The unpredictability of $\lambda(n)$ versus $\mu(n)$}

Stating tight bounds on the distribution of 
$L(x)$ is a problem that is equally as difficult 
as understanding the growth of $M(x)$
along infinite subsequences (\cf \cite{MR2877066,MR3779960,TAO-LOGAVGD-CHOWLA}). 
Indeed, $\lambda(n) = \mu(n)$ for all squarefree $n \geq 1$ so that 
$\lambda(n)$ agrees with $\mu(n)$ at most large $n$. 
It can be inferred that $\lambda(n)$ must inherit the pseudo-randomized quirks 
of $\mu(n)$ predicted by Sarnak's conjecture. 
On the other hand, the formulas in 
Theorem \hlocalref{prop_Mx_SBP_IntegralFormula} are more desirable to explore than 
other classical formulae for $M(x)$ according to the rationale in the following points:
\begin{itemize}
\item Breakthrough work in recent years due to 
	 Matom\"aki, Radziwi{\l\l} and Soundararajan to 
	 bound multiplicative functions 
	 in short intervals has 
	 proven fruitful when applied to $\lambda(n)$ 
	 \cite{SOUND-LLAMBDA-SHORT-INTS,MATRADZE-MULTFUNCS-SHORT-INTS}. 
	 The analogs of results of this type corresponding 
	 to the M\"obius function are not clearly attained; 
\item The squarefree $n \geq 1$ on which $\lambda(n)$ and $\mu(n)$ must identically agree 
	 are in some senses easier integer cases to handle 
	 insomuch as we can prove limiting distributions for the distinct values of 
	 $\omega(n)$, $\Omega(n)$, and their difference, over $n \leq x$ as $x \rightarrow \infty$ 
	 \citep[\cf \S 2.4; \S 7.4]{MV}; 
\item The function $\lambda(n)$ is completely 
	 multiplicative. Hence, the values of the non-zero function $\lambda(n)$ may be 
	 more regular in certain ways on the integers $n \geq 4$ for which $\mu(n) = 0$. 
\end{itemize}

\addcontentsline{toc}{section}{Acknowledgements}
\section*{Acknowledgements}

We credit Appendix \hlocalref{subSection_OtherFactsAndResults} 
to correspondence with 
Gerg\H{o} Nemes from the Alfr\'{e}d R\'{e}nyi Institute of Mathematics. 

\renewcommand{\refname}{References} 
\addcontentsline{toc}{section}{References}
\bibliographystyle{plain}

\appendix
\cftaddtitleline{toc}{section}{Appendices on supplementary material}{}
\setcounter{section}{0} 
\renewcommand{\thesection}{\Alph{section}} 
\newpage

\section{The distributions of $\omega(n)$ and $\Omega(n)$} 
\label{subSection_TheKnownDistsOfThePrimeOmegaFunctions_IntroResults_v1} 

As $n \rightarrow \infty$, we have that 
$$\frac{1}{n} \times \sum_{k \leq n} \omega(k) \sim \log\log n + B_1,$$ 
and 
$$\frac{1}{n} \times \sum_{k \leq n} \Omega(k) \sim \log\log n + B_2,$$ 
where $B_1 \approx 0.261497$ and $B_2 \approx 1.03465$ are 
absolute constants \cite[\S 22.10]{HARDYWRIGHT}. 
The next theorems from \cite[\S 7.4]{MV} bound the frequency of the 
number of times $\Omega(n)$ $n \leq x$ 
diverges substantially from its average order at integers $n \leq x$ 
when $x$ is large 
(\cf \cite{ERDOS-KAC-REF,BILLINGSLY-CLT-PRIMEDIVFUNC}). 

\begin{theorem} 
\label{theorem_MV_Thm7.20-init_stmt} 
For $x \geq 2$ and $r > 0$, let 
\begin{align*} 
A(x, r) & := \#\left\{n \leq x: \Omega(n) \leq r \log\log x\right\}, \\ 
B(x, r) & := \#\left\{n \leq x: \Omega(n) \geq r \log\log x\right\}. 
\end{align*} 
If $0 < r \leq 1$, then 
\[
A(x, r) \ll\phantom{_R} x (\log x)^{r-1 - r\log r}, \mathtext{ as } x \rightarrow \infty. 
\]
If $1 \leq r < 2$, then 
\[
B(x, r) \ll x (\log x)^{r-1-r \log r}, \mathtext{ as } x \rightarrow \infty. 
\]
\end{theorem}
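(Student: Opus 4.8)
The plan is to prove both estimates by Rankin's method, reducing each counting problem to a single uniform mean-value bound for the completely multiplicative function $n \mapsto z^{\Omega(n)}$, and then optimizing the free parameter $z$. Throughout I write $\ell := \log\log x$. For $0 < z \le 1$ and any $n$ counted by $A(x,r)$ we have $\Omega(n) \le r\ell$, hence $z^{\Omega(n)} \ge z^{r\ell}$, so that $1 \le z^{\Omega(n) - r\ell}$; summing over these $n$ gives
\[
A(x,r) \le z^{-r\ell} \sum_{n \le x} z^{\Omega(n)}.
\]
The same manipulation with $z \ge 1$ reverses the inequality on the exponents and yields $B(x,r) \le z^{-r\ell} \sum_{n \le x} z^{\Omega(n)}$; in both regimes the point is merely that every integer in the counted set contributes a summand that is at least $1$. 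Since $z^{-r\ell} = (\log x)^{-r\log z}$, both bounds take the common shape $(\log x)^{-r\log z} \sum_{n \le x} z^{\Omega(n)}$.

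The crux is the uniform estimate
\[
\sum_{n \le x} z^{\Omega(n)} \ll_R x (\log x)^{z-1}, \qquad 0 \le z \le R < 2,
\]
which I would derive from the Euler product $\sum_{n \ge 1} z^{\Omega(n)} n^{-s} = \prod_p (1 - z p^{-s})^{-1}$, valid for $\Re(s) > 1$. Writing this Dirichlet series as $\zeta(s)^z G_z(s)$ with $G_z(s) := \prod_p (1 - p^{-s})^{z} (1 - z p^{-s})^{-1}$, one checks that each Euler factor of $G_z$ equals $1 + O_R(p^{-2\Re(s)})$, so that $G_z$ is holomorphic and bounded in a half-plane $\Re(s) \ge 1 - \delta$ for some $\delta = \delta(R) > 0$ (its nearest singularity, coming from the $p=2$ factor, sits at $\Re(s) = \log z/\log 2 < 1$). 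This is exactly the configuration to which the Selberg--Delange method, or equivalently a suitable Tauberian theorem, applies, and it yields the displayed bound with an $R$-dependent implied constant. I would stress that the cheaper Euler-product/Mertens bound $\sum_{n \le x} z^{\Omega(n)}/n \ll_R (\log x)^{z}$ does \emph{not} suffice here: it loses a factor of $\log x$ and would lead to the exponent $z - r\log z$ rather than the sharp $z - 1 - r\log z$, so a genuine mean value of $z^{\Omega(n)}$, and not of its $1/n$-weighted average, is required. The divergence of the $p=2$ Euler factor as $z \uparrow 2$ is precisely what forces the restriction $r < 2$ and the dependence $\ll_R$ in the second bound.

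Finally I would optimize. Combining the two steps gives $A(x,r), B(x,r) \ll_R x (\log x)^{E(z)}$ with $E(z) := z - 1 - r\log z$. Since $E'(z) = 1 - r/z$ and $E''(z) = r/z^2 > 0$, the exponent is minimized at $z = r$, where $E(r) = r - 1 - r\log r$, which is the claimed exponent. For $A(x,r)$ the optimal choice $z = r$ lies in $(0,1]$ exactly when $0 < r \le 1$, and for $B(x,r)$ it lies in $[1,R]$ exactly when $1 \le r \le R < 2$, so in each case the choice of $z$ is admissible. I expect the main obstacle to be the uniform mean-value estimate of the second paragraph, and specifically the uniformity of its implied constant as $z$ ranges over $[0,R]$: the Rankin step and the final optimization are routine calculus, but extracting the sharp power $x(\log x)^{z-1}$, rather than the lossy $x(\log x)^z$ coming from the trivial Euler-product bound, requires the analytic continuation and Tauberian input above together with control of the blow-up of $G_z$ as $z \to 2$.
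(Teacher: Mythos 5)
Your proof is correct, and it coincides with the argument behind the paper's version of this result: the paper does not prove the theorem itself but reproduces it from \cite[\S 7.4]{MV} (Theorem 7.20 there), whose proof is exactly the Rankin's-method argument you give — bound the count by $z^{-r\log\log x}\sum_{n\le x}z^{\Omega(n)}$, invoke the uniform Selberg--Delange-type mean value $\sum_{n\le x}z^{\Omega(n)}\ll_R x(\log x)^{z-1}$ for $0\le z\le R<2$, and optimize at $z=r$. Your diagnosis that the $p=2$ Euler factor of $\prod_p(1-p^{-s})^{z}(1-zp^{-s})^{-1}$ is what forces $R<2$ and the $R$-dependence of the constant (while the constant for $A(x,r)$ is absolute since there $z\le 1$) matches the treatment in that reference.
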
 
\begin{proof}
The proof of this theorem is given in \cite[Thm.~7.20; \S 7.4]{MV}. 
It uses an adaptation of Rankin's method in combination with 
\cite[Thm.~7.18; \S 7.4]{MV} to obtain the two upper bounds. 
\end{proof}

\begin{theorem}
\label{theorem_HatPi_ExtInTermsOfGz} 
For integers $k \geq 1$ and $x \geq 2$ 
$$\widehat{\pi}_k(x) := \#\{1 \leq n \leq x: \Omega(n)=k\}.$$ 
For $0 \leq |z| < R$, we define the function 
\[
\mathcal{G}(z) := \frac{1}{\Gamma(1+z)} \times 
	\prod_p \left(1-\frac{z}{p}\right)^{-1} \left(1-\frac{1}{p}\right)^z. 
\]
For $0 < R < 2$, uniformly for $1 \leq k \leq R \log\log x$ 
\begin{equation}
\label{eqn_PiHatkx_UniformAsymptoticsStmt_from_MV_v1}
\widehat{\pi}_k(x) = \frac{x}{\log x} \times \mathcal{G}\left(\frac{k-1}{\log\log x}\right) 
     \frac{(\log\log x)^{k-1}}{(k-1)!} \times \left(1 + O\left(\frac{k}{(\log\log x)^2}\right)\right), 
     \mathtext{ as } x \rightarrow \infty. 
\end{equation}
\end{theorem}
\begin{proof}
The proof of this theorem is given in \cite[Thm.~7.19; \S 7.4]{MV}. 
The notation $\widehat{\pi}_k(x)$ is distinct from that 
other references that use $N_k(x)$ and $\sigma_k(x)$, 
respectively \citep[Eqn.~(7.61)]{MV} \citep[\cf \S II.6]{TENENBAUM-PROBNUMT-METHODS}. 
\end{proof}

\begin{theorem} 
\label{remark_MV_Pikx_FuncResultsAnnotated_v1} 
For integers $k  \geq 1$ and $x \geq 2$, we define 
\[
\pi_k(x) := \#\{2 \leq n \leq x: \omega(n)=k\}.
\]
We define the function 
\[
\widetilde{\mathcal{G}}(z) := \frac{1}{\Gamma(1+z)} \times 
	\prod_p \left(1 + \frac{z}{p-1}\right) \left(1 - \frac{1}{p}\right)^{z}, 
	\mathtext{ for } |z| \leq R < 2. 
\]
For fixed $0 < R < 2$, as $x \rightarrow \infty$ we have 
uniformly for $1 \leq k \leq R\log\log x$ that 
\begin{equation}
\label{eqn_Pikx_UniformAsymptoticsStmt_from_MV_v2} 
\pi_k(x) = \frac{x}{\log x} \times 
     \widetilde{\mathcal{G}}\left(\frac{k-1}{\log\log x}\right) 
     \frac{(\log\log x)^{k-1}}{(k-1)!} \times \left( 
     1 + O\left(\frac{k}{(\log\log x)^2}\right) 
     \right). 
\end{equation}
\end{theorem}
\begin{proof}
We can extend the proofs in \cite[\S 7]{MV} to obtain 
analogous results on the distribution of $\omega(n)$. 
This result is cited as an exercise in \cite{MV}. 
\end{proof}

\section{The upper incomplete gamma function} 
\label{subSection_OtherFactsAndResults} 

\begin{definition}
The (upper) incomplete gamma function is defined by \cite[\S 8.4]{NISTHB} 
\[
\Gamma(a, z) = \int_{z}^{\infty} t^{a-1} e^{-t} dt, \mathtext{ for } 
     a, z \in \mathbb{R}^{+}.  
\]
The function $\Gamma(a, z)$ can be continued to an analytic function of $z$ on the 
universal covering of $\mathbb{C} \mathbin{\backslash} \{0\}$. 
We similarly define the regularized upper incomplete gamma function for real $a, z > 0$ by 
$Q(a, z) := \Gamma(a, z) \Gamma(a)^{-1}$. 
\end{definition}

\label{facts_ExpIntIncGammaFuncs} 
\begin{subequations}
The following properties are known \cite[\S 8.4; \S 8.11(i)]{NISTHB}: 
\begin{align} 
\label{eqn_IncompleteGamma_PropA} 
Q(a, z) & = e^{-z} \times \sum_{k=0}^{a-1} \frac{z^k}{k!}, \mathtext{ for } 
     a \in \mathbb{Z}^{+} \mathtext{ and } z \in \mathbb{R}^{+}, \\ 
\label{eqn_IncompleteGamma_PropB} 
\Gamma(a, z) & \sim z^{a-1} e^{-z}, \mathtext{ for fixed } a > 0 
     \mathtext{ and } z > 0 \mathtext{ as } z \rightarrow \infty. 
\end{align}
For $z > 0$, as $z \rightarrow \infty$ we have that \cite{NEMES2015C} 
\begin{equation} 
\label{eqn_IncompleteGamma_PropC}
\Gamma(z, z) = \sqrt{\frac{\pi}{2}} z^{z-\frac{1}{2}} e^{-z} \times \left(
	1 + O\left(\frac{1}{\sqrt{z}}\right)\right). 
\end{equation} 
For fixed, finite real $\rho > 0$, we define the sequence 
$\{b_n(\rho)\}_{n \geq 0}$ by the following recurrence relation: 
\[
b_n(\rho) = (1-\rho) \rho \cdot b_{n-1}^{\prime}(\rho) + 
	(2n-1) \rho \cdot b_{n-1}(\rho) + \delta_{n,0}. 
\]
For fixed $\rho > 0$, the sequence $\{b_n(\rho)\}_{n \geq 0}$ satisfies a 
Rodrigues type formula of the form \cite[Thm.~1.1]{NEMES2016}
\[
b_n(\rho) = (1-\rho)^n \times \frac{\partial^n}{\partial t^n}\left( 
     \frac{(\rho - 1) t}{\rho e^{t} - t - \rho}\right)^{n+1} \Biggr\rvert_{t=0}. 
\]
If $z,a \rightarrow \infty$ with $z = \rho a$ for some $\rho > 1$ such that 
$(\rho - 1)^{-1} = o\left(\sqrt{a}\right)$, then \cite{NEMES2015C}
\begin{equation}
\label{eqn_IncompleteGamma_PropD}
\Gamma(a, z) \sim z^a e^{-z} \times \sum_{n \geq 0} \frac{(-a)^n b_n(\rho)}{(z-a)^{2n+1}}. 
\end{equation} 
\end{subequations}

\begin{prop}
\label{prop_IncGammaLambdaTypeBounds_v1}
\begin{subequations}
Let $a,z > 0$ be taken such that as $a,z \rightarrow \infty$ (independently), the 
parameter $\rho := \frac{z}{a} > 0$ has a finite limit. 
The following results hold as $z \rightarrow \infty$: 
\begin{itemize}
\item
If $\rho \in (0, 1)$, then 
\begin{equation}
\Gamma(a, z) = \Gamma(a) + O_{\rho}\left(z^{a-1} e^{-z}\right). 
\end{equation}
\item 
If $\rho > 1$, then 
\begin{equation}
\Gamma(a, z) = \frac{z^{a-1} e^{-z}}{1-\rho^{-1}} + O_{\rho}\left(z^{a-2} e^{-z}\right). 
\end{equation}
\item
If $\rho > W(1) > 0.56714$, then 
\begin{equation}
\Gamma(a, z e^{\pm\pi\imath}) = -e^{\pm \pi\imath a} \frac{z^{a-1} e^{z}}{1 + \rho^{-1}} + 
     O_{\rho}\left(z^{a-2} e^{z}\right). 
\end{equation}
\end{itemize}
\end{subequations}
\end{prop}

\begin{remark*}
The first two estimates in 
Proposition \hlocalref{prop_IncGammaLambdaTypeBounds_v1} 
are only useful when $\rho$ is bounded away from the transition point at one. 
We cannot write the last expansion above 
as $\Gamma(a, -z)$ directly unless $a \in \mathbb{Z}^{+}$ 
as the incomplete gamma function 
has a branch point at the origin with respect to its second variable. 
This function becomes a single-valued 
analytic function of its second input by continuation 
on the universal covering of $\mathbb{C} \mathbin{\backslash} \{0\}$. 
\end{remark*}

\begin{proof}[Proof of Proposition \hlocalref{prop_IncGammaLambdaTypeBounds_v1}] 
The first asymptotic estimate follows directly from the following 
asymptotic series expansion that holds as $z \rightarrow \infty$ 
\cite[Eq.\ (2.1)]{NEMES2019}: 
\[
\Gamma(a, z) \sim \Gamma(a) + z^a e^{-z} \times \sum_{k \geq 0} 
     \frac{(-a)^k b_k(\rho)}{(z-a)^{2k+1}}. 
\]
Suppose that $\rho > 0$. 
The notation from \eqref{eqn_IncompleteGamma_PropD} and 
\cite[Thm.~1.1]{NEMES2016} shows that 
\[
\Gamma(a, z) = \frac{z^{a-1} e^{-z}}{1-\rho^{-1}} + z^{a} e^{-z} R_1(a, \rho). 
\]
From the bounds in \cite[\S 3.1]{NEMES2016}, we have 
\[
\left\lvert z^{a} e^{-z} R_1(a, \rho) \right\rvert \leq 
     z^a e^{-z} \times \frac{a \cdot b_1(\rho)}{(z-a)^{3}} = 
     \frac{z^{a-2} e^{-z}}{(1-\rho^{-1})^{3}}
\]
The main and error terms in the previous equation can also be 
seen by applying the asymptotic series in 
\eqref{eqn_IncompleteGamma_PropD} directly. 

The proof of the third equation above follows from the asymptotics 
\cite[Eq.\ (1.1)]{NEMES2015C}
\[
\Gamma(-a, z) \sim z^{-a} e^{-z} \times \sum_{n \geq 0} \frac{a^n b_n(-\rho)}{(z+a)^{2n+1}}, 
\]
by setting $(a, z) \mapsto \left(a e^{\pm \pi\imath}, z e^{\pm \pi\imath}\right)$ so that 
$\rho = \frac{z}{a} > W(1)$. 
The restriction on the range of $\rho$ over which the third formula holds is made to ensure that 
the formula from the reference is valid at negative real $a$. 
\end{proof}

\begin{lemma}
\label{lemma_ConvenientIncGammaFuncTypePartialSumAsymptotics_v2}
\begin{subequations}
\label{eqn_lemma_ConvenientIncGammaFuncTypePartialSumAsymptotics_v2}
As $x \rightarrow \infty$  
\begin{align}
\label{eqn_lemma_ConvenientIncGammaFuncTypePartialSumAsymptotics_v2_v1}
\left\lvert \sum_{1 \leq k \leq \log\log x} 
     \frac{(-1)^k (\log\log x)^{k-1}}{(k-1)!} \right\rvert 
     & = \frac{\log x}{2\sqrt{2\pi \log\log x}} \times 
     \left(1 + O\left(\frac{1}{\log\log x}\right)\right). 
\end{align}
For any $a \in \left(1, W(1)^{-1}\right) \subset (1, 1.76321)$ 
\begin{align}
\label{eqn_lemma_ConvenientIncGammaFuncTypePartialSumAsymptotics_v2_v2}
\left\lvert \sum_{k=1}^{a \log\log x} 
     \frac{(-1)^{k} (\log\log x)^{k-1}}{(k-1)!} 
     \right\rvert = 
     \frac{a^{\frac{1}{2} - \{a\log\log x\}}}{(1 + a)} 
     \times \frac{(\log x)^{a-a\log a}}{\sqrt{2\pi \log\log x}} \times 
     \left(1 + O\left(\frac{1}{\log\log x}\right)\right), 
     \mathtext{ as } x \rightarrow \infty. 
\end{align}
The function $\{x\} = x - \floor{x} \in [0, 1)$ 
denotes the fractional part of $x \in \mathbb{R}$.
\end{subequations}
\end{lemma}
\begin{proof}[Proof of Equation \eqref{eqn_lemma_ConvenientIncGammaFuncTypePartialSumAsymptotics_v2_v1}]
We have for $n \geq 1$ and any $t > 0$ by 
\eqref{eqn_IncompleteGamma_PropA} that 
\[
\sum_{1 \leq k \leq n} \frac{(-1)^k t^{k-1}}{(k-1)!} = -e^{-t} \times 
     \frac{\Gamma(n, -t)}{(n-1)!}. 
\]
Suppose that $t = n + \xi$ with $\xi = O(1)$. 
By the third formula 
in Proposition \hlocalref{prop_IncGammaLambdaTypeBounds_v1} 
with the parameters $(a, z, \rho) \mapsto \left(n, t, 1 + \frac{\xi}{n}\right)$, 
we deduce that as $n,t \rightarrow \infty$. 
\begin{equation*}
\Gamma(n, -t) = (-1)^{n+1} \times \frac{t^n e^{t}}{t+n} + 
     O\left(\frac{n t^n e^{t}}{(t+n)^3}\right) = 
     \frac{(-1)^{n+1} t^n e^t}{2n} + O\left(\frac{t^{n-1} e^t}{n}\right). 
\end{equation*}
Accordingly, we see that 
\[
\sum_{1 \leq k \leq n} \frac{(-1)^k t^{k-1}}{(k-1)!} = 
      \frac{(-1)^{n} t^n}{2n!} + O\left(\frac{t^{n-1}}{n!}\right). 
\]
The form of Stirling's formula in \cite[\cf Eq.\ (5.11.8)]{NISTHB} shows that 
\begin{align*}
n! & = \Gamma(1 + t - \xi) 
     = \sqrt{2\pi} t^{n+\frac{1}{2}} e^{-t} \times \left(1 + O\left(t^{-1}\right)\right). 
\end{align*}
Hence, as $n \rightarrow \infty$ with $t := n + \xi$ and $\xi = O(1)$, we obtain that 
\[
\sum_{k=1}^{n} \frac{(-1)^k t^{k-1}}{(k-1)!} = \frac{(-1)^n e^t}{2 \sqrt{2\pi t}} + 
     O\left(e^t t^{-\frac{3}{2}}\right). 
\]
The conclusion follows by taking $n := \floor{\log\log x}$ and $t := \log\log x$. 
\end{proof}
\begin{proof}[Proof of Equation \eqref{eqn_lemma_ConvenientIncGammaFuncTypePartialSumAsymptotics_v2_v2}]
The argument is nearly identical to the proof of the first equation. 
The key modifications are to set $t := an + \xi$ where $\xi = O(1)$, take the parameters 
$\left(a, z, \rho\right) \mapsto \left(an, t, 1 + \frac{\xi}{an}\right)$, and 
use the expansion $a^{an} = e^{an \log a}$, to simplify the main term obtained 
from Stirling's formula. 
\end{proof}

\section{Inversion of partial sums of Dirichlet convolutions}
\label{Section_PrelimProofs_Config} 
\label{subSection_PrelimProofs_Config_InversionTheorem}

\begin{proof}[Proof of Theorem \hlocalref{theorem_SummatoryFuncsOfDirCvls}] 
\label{proofOf_theorem_SummatoryFuncsOfDirCvls} 
Suppose that $h,r$ are arithmetic functions such that $r(1) \neq 0$. 
The following holds for all $x \geq 1$: 
\begin{align} 
\notag 
S_{r \ast h}(x) & := \sum_{n=1}^{x} \sum_{d|n} r(n) h\left(\frac{n}{d}\right), \\ 
\notag
     & \phantom{:} = 
     \sum_{d=1}^x r(d) \times H\left(\floor{\frac{x}{d}}\right), \\ 
\label{eqn_proof_tag_PigAsthx_ExactSummationFormula_exp_v2} 
     & \phantom{:} = 
     \sum_{i=1}^x \left(R\left(\floor{\frac{x}{i}}\right) - R\left(\floor{\frac{x}{i+1}}\right)\right) \times H(i). 
\end{align} 
The first formula for $S_{r \ast h}(x)$ is well known in the references. 
The second formula is justified directly using 
summation by parts as follows \cite[\S 2.10(ii)]{NISTHB}: 
\begin{align*} 
S_{r \ast h}(x) & = \sum_{d=1}^x h(d) \times R\left(\floor{\frac{x}{d}}\right), \\ 
     & = \sum_{i \leq x} \left(\sum_{j \leq i} h(j)\right) \times 
     \left(R\left(\floor{\frac{x}{i}}\right) - 
     R\left(\floor{\frac{x}{i+1}}\right)\right). 
\end{align*} 
For boolean-valued conditions \texttt{cond}, we adopt Iverson's convention that 
$\Iverson{\mathtt{cond}} \in \{0, 1\}$ evaluates to one precisely when 
\texttt{cond} is true and to zero otherwise.

We form the invertible matrix of coefficients (denoted by $\hat{R}$ below) 
by defining
\[
R_{x,j} := R\left(\Floor{x}{j}\right) \Iverson{j \leq x}, 
\]
and 
\[
r_{x,j} := R_{x,j} - R_{x,j+1}, \mathtext{ for } 1 \leq j \leq x. 
\] 
If we let $\hat{R} := (R_{x,j})$, then the next matrix is 
expressed by applying an invertible shift operation as 
\[
(r_{x,j}) = \hat{R} \left(I - U^{T}\right). 
\]
Since $r_{x,x} = R(1) = r(1) \neq 0$ for all $x \geq 1$ and $r_{x,j} = 0$ for all $j > x$, 
the matrix we have defined in this problem is lower triangular with a non-zero 
constant on its diagonals, and so is invertible. 

For any fixed $N \geq 1$, the $N \times N$ square matrix $U$ 
has $(i,j)^{th}$ entries for all $1 \leq i,j \leq N$ when $N \geq x$ that are defined by 
$(U)_{i,j} = \delta_{i+1,j}$ so that 
\[
\left[\left(I - U^T\right)^{-1}\right]_{i,j} = \Iverson{j \leq i}. 
\]
We observe that 
\[
\Floor{x}{j} - \Floor{x-1}{j} = \begin{cases} 
     1, & \text{ if $j|x$; } \\ 
     0, & \text{ otherwise. } 
     \end{cases} 
\] 
The previous equation implies that 
\begin{equation} 
\label{eqn_proof_tag_FloorFuncDiffsOfSummatoryFuncs_v2} 
R\left(\floor{\frac{x}{j}}\right) - R\left(\floor{\frac{x-1}{j}}\right) = 
     \begin{cases} 
     r\left(\frac{x}{j}\right), & \text{ if $j|x$; } \\ 
     0, & \text{ otherwise. } 
     \end{cases}
\end{equation} 
We use the property in \eqref{eqn_proof_tag_FloorFuncDiffsOfSummatoryFuncs_v2} 
to shift the matrix $\hat{R}$, and then invert the result to obtain a matrix involving the 
Dirichlet inverse of $r$, as follows: 
\begin{align*} 
\left(\left(I-U^{T}\right) \hat{R}\right)^{-1} & = 
     \left(r\left(\frac{x}{j}\right) \Iverson{j|x}\right)^{-1} = 
     \left(r^{-1}\left(\frac{x}{j}\right) \Iverson{j|x}\right). 
\end{align*} 
The target matrix is expressed by 
$$(r_{x,j}) = \left(I-U^{T}\right) \left(r\left(\frac{x}{j}\right) \Iverson{j|x}\right) \left(I-U^{T}\right)^{-1}.$$
We can evaluate its inverse by a similarity transformation conjugated by 
shift operators given by 
\begin{align*} 
(r_{x,j})^{-1} & = \left(I-U^{T}\right)^{-1} \left(r^{-1}\left(\frac{x}{j}\right) 
     \Iverson{j|x}\right) \left(I-U^{T}\right), \\ 
     & = \left(\sum_{k=1}^{\floor{\frac{x}{j}}} r^{-1}(k)\right) \left(I-U^{T}\right), \\ 
     & = \left(\sum_{k=1}^{\floor{\frac{x}{j}}} r^{-1}(k) - \sum_{k=1}^{\floor{\frac{x}{j+1}}} r^{-1}(k)\right). 
\end{align*} 
The summatory function $H(x)$ is expressed 
by a vector product with the inverse matrix from the previous equation as 
\begin{align*} 
H(x) & = \sum_{k=1}^x \left(\sum_{j=\floor{\frac{x}{k+1}}+1}^{\floor{\frac{x}{k}}} r^{-1}(j)\right) 
	\times S_{r \ast h}(k), \mathtext{ for } x \geq 1. 
\end{align*} 
We can prove another inversion formula by adapting our argument used to prove 
\eqref{eqn_proof_tag_PigAsthx_ExactSummationFormula_exp_v2} above. 
This leads to an alternate expression for $H(x)$ given by 
\[
H(x) = \sum_{k=1}^{x} r^{-1}(k) \times S_{r \ast h}\left(\Floor{x}{k}\right), 
     \mathtext{ for } x \geq 1. 
     \qedhere 
\]
\end{proof} 

\section{The proof of Theorem \hlocalref{lemma_HatCAstxSum_ExactFormulaWithError_v1}} 
\label{Appendix_ProofOfCOmegan_LogarithmicAvgOrderFormula}

\begin{lemma}
\label{lemma_eqn_proof_tag_SumLogCOmegan_P0_exp_v1}
As $x \rightarrow \infty$  
\begin{align}
\label{eqn_proof_tag_SumLogCOmegan_P0_exp_v1}
\sum_{n \leq x} \log C_{\Omega}(n) & = 
     \sum_{k \geq 1} \#\{n \leq x: \Omega(n)=k\} \times \log(k!) 
     \left(1 + O\left(\frac{1}{\sqrt{\log\log x}}\right)\right). 
\end{align}
\end{lemma}
\begin{proof}
Equation \eqref{eqn_proof_tag_hInvn_ExactNestedSumFormula_CombInterpetIdent_v3} shows that 
\[
\sum_{\substack{n \leq x \\ \mu^2(n)=1}} \log C_{\Omega}(n) = 
	\sum_{k \geq 1} \#\{n \leq x: \Omega(n)=k\} \times \log(k!). 
\]
The sum on the right-hand-side o the last equation is finite since 
$\Omega(n) \leq \log_2(x)$ for all $x \geq 2$. 
The key to the rest of the proof is to understand that the main term of the 
sum on the left-hand-side of 
equation \eqref{eqn_proof_tag_SumLogCOmegan_P0_exp_v1} 
is obtained by summing over only 
the squarefree $n \leq x$, i.e., the $n \leq x$ such that $\mu^2(n) = 1$. 
That is, we claim that 
\begin{equation}
\label{eqn_ProofTag_SqFreeSumSim_TargetClaim_v1}
\sum_{k \geq 1} \sum_{\substack{n \leq x \\ \Omega(n)=k}} \log C_{\Omega}(n) \sim 
	\sum_{k \geq 1} \sum_{\substack{n \leq x \\ \mu^2(n) = 1 \\ \Omega(n)=k}} \log C_{\Omega}(n). 
\end{equation}
The function $\operatorname{rad}(n)$ is the radix (or squarefree part) of $n$ which evaluates 
to the largest squarefree factor of $n$, 
or equivalently to the product of all primes $p | n$ 
\cite[\seqnum{A007913}]{OEIS}. 
For integers $x \geq 1$ and $1 \leq k \leq \log_2(x)$, define the sets 
\[
\mathcal{S}_k\left(\{\varpi_j\}_{j=1}^k; x\right) := \left\{2 \leq n \leq x: \mu(n) = 0, \omega(n) = k, 
	\frac{n}{\operatorname{rad}(n)} = p_1^{\varpi_1} \times \cdots \times p_k^{\varpi_k}, 
	\mathtext{ $p_i \neq p_j$ prime for $1 \leq i < j \leq k$}\right\}. 
\]
Recall that our goal is to show that the sums of $\log C_{\Omega}(n)$ at the non-squarefree $n \leq x$ 
corresponds to an error term on the right-hand-side of equation \eqref{eqn_ProofTag_SqFreeSumSim_TargetClaim_v1}. 
Then the idea behind the definition in the previous equation is as follows: 
For every non-squarefree integer $n \in [2, x]$, there is some $1 \leq k \leq \log_2(x)$ 
and a non-empty sequence of $k$ \emph{positive} integers $\{\varpi_j\}_{1 \leq j \leq k}$ such that 
$n \in \mathcal{S}_k\left(\{\varpi_j\}_{j=1}^k; x\right)$. 
Hence, we need to bound the growth of this function at non-empty sets 
$\{\varpi_j\}_{j=1}^k \subseteq \mathbb{Z}^{+}$ for $k \geq 1$. 

Let the function 
$$\mathcal{N}_k\left(\{\varpi_j\}_{j=1}^k; x\right) := 
  \frac{\left\lvert \mathcal{S}_k\left(\{\varpi_j\}_{j=1}^k; x\right) \right\rvert}{x}.$$ 
The special case where $\mathcal{W}_k := \{\varpi_j^{\ast}\}_{1 \leq j \leq k} \equiv \{0, 1\}$ 
is the set with unit value of multiplicity exactly one is denoted by 
$$\widehat{T}_k(x) := \mathcal{N}_k\left(\mathcal{W}_k; x\right).$$ 
If $n \in [2, x]$ is not squarefree and $n \in \mathcal{S}_k\left(\{\varpi_j\}_{j=1}^k; x\right)$, then 
we must have that $\varpi_j \geq 1$ for at least one $1 \leq j \leq k$. 
For any $k \geq 1$ and non-empty set 
$\{\varpi_j\}_{1 \leq j \leq k} \subset \mathbb{Z}^{+}$ of $k$ elements we have 
\[
\mathcal{N}_k\left(\{\varpi_j\}_{j=1}^k; x\right) \ll \widehat{T}_k(x). 
\]
\begin{subequations}
We claim that (proof below) 
\begin{equation}
\label{eqn_ProofTag_HatTkx_LL_v2}
     \widehat{T}_k(x) \ll \frac{1}{\sqrt{\log\log x}} \times 
     \#\{n \leq x: \omega(n)=k\}, 
     \mathtext{ for all } k \geq 1, \mathtext{ as } x \rightarrow \infty. 
\end{equation}
Suppose that equation \eqref{eqn_ProofTag_HatTkx_LL_v2} is true. 
Theorem \hlocalref{theorem_HatPi_ExtInTermsOfGz}, 
Theorem \hlocalref{remark_MV_Pikx_FuncResultsAnnotated_v1} and 
equation \eqref{eqn_ProofTag_HatTkx_LL_v2} imply that 
\begin{equation}
\label{eqn_ProofTag_HatTkx_LL_v2_v2}
\widehat{T}_k(x) \ll \frac{1}{\sqrt{\log\log x}} \times 
     \#\{n \leq x: \Omega(n)=k\}, 
     \mathtext{ for all } k \geq 1, \mathtext{ as } x \rightarrow \infty. 
\end{equation}
\end{subequations}
The upper bound on $\widehat{T}_k(x)$ in 
equation \eqref{eqn_ProofTag_HatTkx_LL_v2_v2} 
shows that the sum of denominator differences from 
\eqref{eqn_proof_tag_hInvn_ExactNestedSumFormula_CombInterpetIdent_v3} we subtract from the 
main term over the squarefree $n \leq x$ is asymptotically insubstantial. 
That is, we have proved that 
\[
\sum_{\substack{n \leq x \\ \mu(n)=0}} \log C_{\Omega}(n) = 
     o\left(\sum_{\substack{n \leq x \\ \mu^2(n)=1}} \log C_{\Omega}(n)\right), 
     \mathtext{ as } x \rightarrow \infty. 
     \qedhere
\]
\end{proof}

Recall the next two famous asymptotic formulae:
\begin{itemize}[noitemsep,topsep=0pt,leftmargin=0.64in]
\item[\textbf{1.}] As $x \rightarrow \infty$ 
     \cite[\S 22.4]{HARDYWRIGHT}
     $$\pi(x) = \frac{x}{\log x} \times \left(1 + O\left(\frac{1}{\log x}\right)\right).$$
\item[\textbf{2.}] A theorem of Mertens states that as $x \rightarrow \infty$ 
     \cite[\S 22.7--22.8]{HARDYWRIGHT}
     $$\sum_{p \leq x} p^{-1} \sim \log\log x.$$
\end{itemize}

\begin{proof}[Proof of Equation \eqref{eqn_ProofTag_HatTkx_LL_v2}]
The bound can be proved by induction on $k \geq 1$ using the inductive hypothesis
\[
\tag{IH}
\widehat{T}_m(x) \ll \frac{x^{1-2^{-m}} (\log\log x)^{m}}{(\log x)^{1+2^{-m}}}, 
	\mathtext{ for all } 1 \leq m \leq k, 
	\mathtext{ as } x \rightarrow \infty. 
\] 
The case where $k := 1$ is evaluated as follows: 
\begin{align*}
\widehat{T}_1(x) & = \sum_{p \leq \sqrt{x}} 1 \ll \frac{\sqrt{x}}{\log x}. 
\end{align*}
Suppose that $k \geq 1$ and that the (IH) holds at $k$. 
Theorem \hlocalref{remark_MV_Pikx_FuncResultsAnnotated_v1} 
and the bounds in the next equations show that 
equation \eqref{eqn_ProofTag_HatTkx_LL_v2} holds for all finite $k \geq 1$.
In particular, we have by the (IH) and 
H\"older's inequality with $\left(p^{-1}, q^{-1}\right) = \left(1-2^{-k}, 2^{-k}\right)$ that 
\begin{align*}
\widehat{T}_{k+1}(x) & \ll \sum_{p \leq \sqrt{x}} \widehat{T}_{k}\left(\frac{x}{p}\right) 
     \ll \frac{x^{1-2^{-k}} (\log\log x)^{k}}{(\log x)^{1+2^{-k}}} \times 
	\left(\sum_{p \leq x} p^{-1}\right)^{1-2^{-k}} \times 
	\pi\left(\sqrt{x}\right)^{2^{-k}} \\ 
     & \ll \frac{2^{2^{-k}} x^{1-2^{-(k+1)}} (\log\log x)^{k+1-2^{-k}}}{(\log x)^{1+2^{-(k-1)}}} ,
	\mathtext{ as } x \rightarrow \infty. 
\end{align*}
Thus, if the following equation holds, then the proof by induction is complete:
\begin{equation}
\label{eqn_ProofTag_HatTkx_LL_v3}
\widehat{U}_{k+1}(x) := \frac{2^{2^{-k}}}{(\log\log x)^{2^{-k}} (\log x)^{3 \cdot 2^{-(k+1)}}} \ll 1, 
     \mathtext{ as } x \rightarrow \infty. 
\end{equation}
For any fixed finite large $x$, we have that $1 \leq k \leq \log_2(x)$. 
In particular, we have that $\frac{1}{x} \leq 2^{-k} \leq \frac{1}{2}$ for all $k$ and large $x$. 
We can then establish the following to prove that 
equation \eqref{eqn_ProofTag_HatTkx_LL_v3} holds: 
\begin{align*}
\widehat{U}_{k+1}(x) & \ll \exp\left(-\frac{3 \log\log\log x}{2x}\right) = 
     1 + O\left(\frac{\log\log\log x}{x}\right). 
\end{align*}
The proof of equation \eqref{eqn_ProofTag_HatTkx_LL_v2} 
is completed by applying 
Theorem \hlocalref{remark_MV_Pikx_FuncResultsAnnotated_v1}. 
\end{proof}

\begin{proof}[Proof of Theorem \hlocalref{lemma_HatCAstxSum_ExactFormulaWithError_v1}]  
We will split the full sum on the left-hand-side of 
\eqref{eqn_proof_tag_SumLogCOmegan_P0_exp_v1} into two sums, 
each over disjoint indices, that form the main and error terms, 
$L_{\Omega}(x)$ and $\widehat{L}_{\Omega}(x)$, respectively.
For $x \geq 19$, consider the following partial sums\footnote{
     Note that the choice of the constant $\frac{3}{2}$ is arbitrary. 
     We can obtain the same result by splitting the upper (lower) bound 
     on $\Omega(n)$ at $r\log\log x$ for any fixed $r > 1$. 
}:
\begin{align*}
L_{M,\Omega}(x) & := 
     \sum_{\substack{n \leq x \\ \Omega(n) \leq \frac{3}{2} \log\log x}} \log C_{\Omega}(n), \\ 
L_{E,\Omega}(x) & := 
     \sum_{\substack{n \leq x \\ \Omega(n) > \frac{3}{2} \log\log x}} 
	\log C_{\Omega}(n).
\end{align*}
\begin{subequations}
We claim that the main term is given by 
\begin{equation}
\label{eqn_proof_tag_LOmegax_MainTerm_v1}
L_{M,\Omega}(x) = 
	x (\log\log x)(\log\log\log x) \left(1 + 
     O\left(\frac{1}{\sqrt{\log\log x}}\right)\right).
\end{equation}
To bound the error term, we claim that 
\begin{equation}
\label{eqn_proof_tag_PartialSumsOver_HatCkx_EquivCond_v2}
L_{E,\Omega}(x) = o\left(x \sqrt{\log\log x} (\log\log\log x)\right), 
     \mathtext{ as } x \rightarrow \infty. 
\end{equation}
\end{subequations}
The proofs of equations \eqref{eqn_proof_tag_LOmegax_MainTerm_v1} and 
\eqref{eqn_proof_tag_PartialSumsOver_HatCkx_EquivCond_v2} are completed below. 
Equations \eqref{eqn_proof_tag_LOmegax_MainTerm_v1} and 
\eqref{eqn_proof_tag_PartialSumsOver_HatCkx_EquivCond_v2} 
yield the conclusion of 
Theorem \hlocalref{lemma_HatCAstxSum_ExactFormulaWithError_v1} since 
\[
\sum_{n \leq x} \log C_{\Omega}(n) = L_{M,\Omega}(x) + L_{E,\Omega}(x), 
     \mathtext{ for all } x > e^e. 
     \qedhere
\]
\end{proof}

\begin{proof}[Proof of Equation \eqref{eqn_proof_tag_LOmegax_MainTerm_v1}]
Lemma \hlocalref{lemma_eqn_proof_tag_SumLogCOmegan_P0_exp_v1} and 
Theorem \hlocalref{theorem_HatPi_ExtInTermsOfGz} from the appendix show that 
\[
L_{M,\Omega}(x) = \frac{x}{\log x} \times \sum_{1 \leq k \leq \frac{3}{2} \log\log x} 
	\mathcal{G}\left(\frac{k-1}{\log\log x}\right) \frac{(\log\log x)^{k-1}}{(k-1)!} 
	\log(k!) \times \left(1 + O\left(\frac{1}{\log\log x}\right)\right), 
\]
where 
\[
\mathcal{G}(z) := \frac{1}{\Gamma(1+z)} \times \prod_p \left(1 - \frac{z}{p}\right)^{-1} \left( 
	1 - \frac{1}{p}\right)^z, 
	\mathtext{ for } |z| < 2. 
\]
For any $j \geq 0$, 
Binet's formula for the log-gamma function is stated as 
\cite[\S 5.9(i)]{NISTHB} 
\[
\log j! = \left(j+\frac{1}{2}\right)\log(1+j) - j + O(1), 
\]
where $z! := \Gamma(1 + z)$ for any real-valued $z \geq 0$. 
Let the function 
$$g(x, k) := \frac{(\log\log x)^{k-1}}{2 (k-1)!} \times \left((2k+1) \log(1+k) - 2k + O(1)\right).$$ 
Binet's formula shows that 
\[
L_{M,\Omega}(x) = \frac{x}{\log x} \times \sum_{1 \leq k \leq \frac{3}{2} \log\log x} 
	\mathcal{G}\left(\frac{k-1}{\log\log x}\right) g(x, k) \times 
	\left(1 + O\left(\frac{1}{\log\log x}\right)\right). 
\]
The Euler-Maclaurin summation (EM) formula \cite[\S 9.5]{GKP} shows that 
for each fixed integer $m \geq 1$ and function $f(t)$ that is $m$ times 
continuously differentiable on $(0, \infty)$ 
\cite[\seqnum{A000367}; \seqnum{A002445}]{OEIS} 
\begin{align*}
L_{M,\Omega}(x) & = \frac{x}{\log x} \times \Biggl(
     \int_1^{\frac{3}{2} \log\log x} 
	\mathcal{G}\left(\frac{t-1}{\log\log x}\right) g(x, t) dt + 
     \frac{1}{2}\mathcal{G}\left(\frac{3}{2}-\frac{1}{\log\log x}\right) 
     g\left(x, \frac{3}{2} \log\log x\right) - \frac{g(x, 1)}{2} \\ 
	& \phantom{=\frac{x}{\log x} \times\ } + 
     O\left(\sum_{k=1}^{m} \frac{B_{k}}{k!} \times \frac{\partial^{(k-1)}}{\partial t^{(k-1)}} 
	\left[\mathcal{G}\left(\frac{t-1}{\log\log x}\right) g(x, t) 
     \right]_{t=1}^{t=\frac{3}{2}\log\log x} + \widehat{R}_{m}[\mathcal{G} \cdot g]\right) 
	\Biggr) \times \left(1 + O\left(\frac{1}{\log\log x}\right)\right). 
\end{align*}
The degree-$m$ remainder term in the previous equation is bounded by 
\begin{align*}
\left\lvert \widehat{R}_m[\mathcal{G} \cdot g] \right\rvert & = 
     O\left(\frac{1}{m!} \times \int_1^{\frac{3}{2} \log\log x} B_m(\{t\}) f^{(m)}(t) dt\right). 
\end{align*}
When $f(t) \equiv f_x(t) := \mathcal{G}\left(\frac{t-1}{\log\log x}\right) g(x, t)$, 
we denote the degree-$m$ EM formula error term by 
$$E_m(x) := \sum\limits_{k=1}^{m} \frac{B_k}{k!} \times \frac{\partial^{(k-1)}}{\partial t^{(k-1)}} 
	\left[\mathcal{G}\left(\frac{t-1}{\log\log x}\right) g(x, t) 
     \right]_{t=1}^{t=\frac{3}{2} \log\log x} + \frac{1}{m!} \times 
     \int_1^{\frac{3}{2} \log\log x} \left\lvert B_m(\{t\}) \right\rvert f_x^{(m)}(t) dt$$ 
\begin{subequations}
It suffices to choose $m := 1$ in the expansion above. 
Specializing to the case where $m := 1$ yields the upper bound 
\begin{align}
\label{eqn_ProofTag_MainTermFunc_LMOmegax_v1_EMErrorTerm_v1}
\left\lvert E_1(x) \right\rvert & \ll \frac{(\log x) (\log\log\log x)}{\sqrt{\log\log x}} + 
     \underset{:= I_1(x)}{\underbrace{\int_1^{\frac{3}{2} \log\log x} t^2 \log(1+t) 
     \times \frac{(\log\log x)^t}{(t+1)!} dt}}, 
     \mathtext{ as } x \rightarrow \infty. 
\end{align}
The integral term, $I_1(x)$, in equation 
\eqref{eqn_ProofTag_MainTermFunc_LMOmegax_v1_EMErrorTerm_v1} is 
bounded using H\"older's inequality with $(p, q) := (1, \infty)$ as follows: 
\begin{align}
\notag
\left\lvert I_1(x) \right\rvert & \ll (\log\log x) \times \max_{1 \leq t \leq \frac{3}{2} \log\log x} 
     \frac{(\log\log\log x) (\log\log x)^{t+2}}{(1 + t) \Gamma(1 + t)} \\ 
\notag 
     & \ll (\log\log x)^3 (\log\log\log x) \times \max_{1 \leq t \leq \frac{3}{2} \log\log x} 
     \frac{(\log\log x)^t e^t}{t^{t+\frac{3}{2}}}, 
     \mathtext{ as } x \rightarrow \infty. 
\end{align}
The maximum in the previous equations is attained when $t := 1$. 
This shows that 
\begin{align}
\notag 
\frac{x}{\log x} \times |E_1(x)| & \ll \frac{x}{\log x} \times |I_1(x)| 
     \ll 
     \frac{x (\log\log x)^{4} (\log\log\log x)}{\log x} \\  
\label{eqn_ProofTag_MainTermFunc_LMOmegax_v1_EMErrorTerm_v3}
     & = 
     o\left(x \sqrt{\log\log x} (\log\log\log x)\right), 
     \mathtext{ as } x \rightarrow \infty. 
\end{align}
\end{subequations}
\begin{subequations}
\label{eqn_ProofTag_MainTermFunc_LMOmegax_v2}
\noindent
The mean value theorem states that for each large $x$ there is some 
$c \in \left[1, \log\log x\right]$ such that 
\begin{align}
\label{eqn_ProofTag_MainTermFunc_LMOmegax_v2.1} 
     \int_1^{\frac{3}{2} \log\log x} 
	\mathcal{G}\left(\frac{t-1}{\log\log x}\right) g(x, t) dt & = 
	\mathcal{G}\left(\frac{c-1}{\log\log x}\right) \times 
     \int_1^{\frac{3}{2} \log\log x} g(x, t) dt. 
\end{align}
For any real $y > e^e$, let 
\begin{equation}
\label{eqn_ProofTag_MainTermFunc_LMOmegax_v2.2}
c(y) := \inf \left\{c \in \left[1, \frac{3}{2} \log\log y\right]: 
     \mathtext{ equation \eqref{eqn_ProofTag_MainTermFunc_LMOmegax_v2.1} holds}\right\}. 
\end{equation}
\end{subequations}
Let $B_0^{\ast}(x) := \mathcal{G}\left(\frac{c(x)-1}{\log\log x}\right)$. 
We can apply the EM formula again to see that 
\begin{align}
\label{eqn_ProofTag_MainTermFunc_LMOmegax_v3} 
 & L_{M,\Omega}(x) = \frac{x}{\log x} \times \Biggl(
     \sum_{1 \leq k \leq \frac{3}{2} \log\log x} B_0^{\ast}(x) g(x, k) + 
	\frac{1}{2}\left(1 - B_0^{\ast}(x)\right) 
     \mathcal{G}\left(\frac{3}{2}-\frac{1}{\log\log x}\right) 
     g\left(x, \frac{3}{2} \log\log x\right) \\ 
\notag
	& \phantom{=\quad\ } + 
     O\left(1 + \sum_{k=1}^{m} \frac{B_{k}}{k!} \times \frac{\partial^{(k-1)}}{\partial t^{(k-1)}} 
	\left[\left(1 + \mathcal{G}\left(\frac{t-1}{\log\log x}\right)\right) g(x, t) 
     \right]_{t=1}^{t=\frac{3}{2} \log\log x} + \widehat{R}_m[(1 + \mathcal{G}) \cdot g]\right) 
	\Biggr) \times \left(1 + O\left(\frac{1}{\log\log x}\right)\right). 
\end{align}
For $m := 1$, the EM formula error term 
satisfies $\left\lvert \widehat{R}_m[(1 + \mathcal{G}) \cdot g] \right\rvert \ll \left\lvert I_1(x) \right\rvert$ 
for all sufficiently large large $x$. 
Equation \eqref{eqn_ProofTag_MainTermFunc_LMOmegax_v1_EMErrorTerm_v3} 
provides bounds on $|E_1(x)|$ and $|I_1(x)|$. 

We have two remaining steps to establish equation \eqref{eqn_proof_tag_LOmegax_MainTerm_v1}: 
\begin{itemize}[leftmargin=0.5in]
\item[\textbf{(i)}] To show that the sums on the right-hand-side of equation 
     \eqref{eqn_ProofTag_MainTermFunc_LMOmegax_v3} give the main term of this expression for 
     $L_{M,\Omega}(x)$ (up to a factor of the bounded function $B_0^{\ast}(x)$); and 
\item[\textbf{(ii)}] To show that there is a limiting constant with 
     $B_0^{\ast}(x) \xrightarrow{x \rightarrow \infty} 1$. 
\end{itemize}
The sums in the previous equation are approximated using Abel summation applied to the 
following functions for $1 \leq u \leq \log\log x$: 
\[
A_x(u) := \sum_{1 \leq k \leq u} \frac{x (\log\log x)^{k-1}}{(\log x) (k-1)!} = 
     \frac{x \Gamma\left(u, \log\log x\right)}{\Gamma\left(u\right)}; 
     \mathtext{ and } 
     f(u) := \frac{(2u+1)}{2} \log\left(1 + u\right) - u + O(1). 
\]
That is, we have by Proposition \hlocalref{prop_IncGammaLambdaTypeBounds_v1} that 
\begin{align}
\notag
\frac{x}{\log x} \times & \sum_{1 \leq k \leq \log\log x} g(x, k) \\ 
\notag
     & = A_x\left(\frac{3}{2} \log\log x\right) f\left(\frac{3}{2} \log\log x\right) - 
     (\log\log x) \times 
     \bigintssss_{\frac{1}{\log\log x}}^{\frac{3}{2}} 
     A_x(\alpha \log\log x) f^{\prime}(\alpha \log\log x) d\alpha + 
     O\left(\frac{x}{\log x}\right) \\ 
\notag 
     & = 
     \frac{3x}{2} (\log\log x) (\log\log\log x) \left(1 + 
     O\left(\frac{1}{\log\log x}\right)\right) + 
     \bigintssss_1^{\frac{3}{2}} f^{\prime}(\alpha \log\log x) \left(1 + 
     O\left(\frac{1}{\sqrt{\log\log x}}\right)\right) d\alpha \\ 
\notag
     & \phantom{\frac{3x}{2} (\log\log x) (\log\log\log x)\Biggl(1 + \ \ } + 
     O\left(x \sqrt{\log\log x} \times \bigintssss_0^1 f^{\prime}(\alpha \log\log x) \times 
     (\log x)^{\alpha - 1} d\alpha\right) \\
\label{eqn_proof_tag_LOmegax_MainTerm_v2}
	& = 
	x (\log\log x)(\log\log\log x) \left(1 + 
     O\left(\frac{1}{\sqrt{\log\log x}}\right)\right). 
\end{align}
\begin{subequations}
\label{eqn_ProofTag_MiscObservations_EMMVT_v1}
We have the following observations: 
\begin{align}
\mathcal{G}\left(\frac{3}{2}-\frac{1}{\log\log x}\right) & = \mathcal{G}\left(\frac{3}{2}\right) 
	\left(1 + O\left(\frac{1}{\log\log x}\right)\right), 
	\mathtext{ as } x \rightarrow \infty, \\ 
g\left(x, \frac{3}{2} \log\log x\right) & \ll 
     (\log x)^{\frac{3}{2}\left(1 - \log\left(\frac{3}{2}\right)\right)} 
     \sqrt{\log\log x} (\log\log\log x) \\ 
\notag
     & = 
     o\left((\log x) \sqrt{\log\log x} (\log\log\log x)\right), 
	\mathtext{ as } x \rightarrow \infty. 
\end{align}
\end{subequations}
Equations \eqref{eqn_ProofTag_MainTermFunc_LMOmegax_v3} and 
\eqref{eqn_proof_tag_LOmegax_MainTerm_v2} and 
\eqref{eqn_ProofTag_MiscObservations_EMMVT_v1} show that 
\begin{align}
\label{eqn_ProofTag_MainTermFunc_LMOmegax_v4} 
L_{M,\Omega}(x) & = B_0^{\ast}(x) x (\log\log x) (\log\log\log x) 
     \left(1 + O\left(\frac{1}{\sqrt{\log\log x}}\right)\right), 
	\mathtext{ as } x \rightarrow \infty. 
\end{align}
This observation accomplishes step \textbf{(i)}. 

Let $\mathcal{C}(t) := \mathcal{G}\left(\frac{c(t)-1}{\log\log \left\lfloor t \right\rfloor}\right)$ where 
$c(t)$ is defined as in equation \eqref{eqn_ProofTag_MainTermFunc_LMOmegax_v2.2} 
for all real $t \geq 19$. 
It is not difficult to see that $\mathcal{C}(t)$ is continuous and differentiable at all 
$t \in \left(e^e, \infty\right)$. 
We can see by computation from equation \eqref{eqn_ProofTag_MainTermFunc_LMOmegax_v2.1} that 
for all sufficiently large $t > e^e$, the derivative of this function satisfies 
$\mathcal{C}^{\prime}(t) < 0$. 
Moreover, we have that $1 \leq \mathcal{C}(t) < 2$ for all $t \geq \ceiling{e^e}$ where 
$0 \leq \frac{c(x)-1}{\log\log x} \leq 1$ for all integers $x \geq 19$. 
This means that 
\[
\lim_{x \rightarrow \infty} \mathcal{C}(x) = \mathcal{G}(0) = 1, 
\]
and so $B_0^{\ast}(x) \xrightarrow{x \rightarrow \infty} 1$. 
This completes step \textbf{(ii)}. 
Step \textbf{(ii)} combined with equation \eqref{eqn_ProofTag_MainTermFunc_LMOmegax_v4} 
(e.g., with step \textbf{(i)}) 
shows that equation \eqref{eqn_proof_tag_LOmegax_MainTerm_v1} holds. 
\end{proof}

\begin{proof}[Proof of Equation \eqref{eqn_proof_tag_PartialSumsOver_HatCkx_EquivCond_v2}]
The following equation holds:
\begin{equation}
\label{eqn_proof_tag_LogCOmegan_ll_FuncOfOmegan_v1}
\log C_{\Omega}(n) \ll \Omega(n) \log \Omega(n), \mathtext{ for } n \leq x, 
     \mathtext{ as } x \rightarrow \infty. 
\end{equation}
The bound for $\log C_{\Omega}(n)$ in 
\eqref{eqn_proof_tag_LogCOmegan_ll_FuncOfOmegan_v1} stated in terms of the variable 
$n \leq x$ holds as the upper bound on the interval $x \rightarrow \infty$. 
The right-hand-side terms involving $\Omega(n) \in [1, \log_2(x)]$ 
oscillate in magnitude over the integers $1 \leq n \leq x$. 
The statement in the last equation follows by 
maximizing (and minimizing) the ratio of the right-hand-side of 
\eqref{eqn_proof_tag_LogCOmegan_ll_FuncOfOmegan_v1} to 
Binet's log-gamma formula. 
We use the following notation to express this ratio as 
\[
\mathcal{R}(n, j) := \frac{\Omega(n) \log \Omega(n)}{\left(j + \frac{1}{2}\right) \log(1 + j) - j}. 
\]
Numerical methods show that $\mathcal{R}(n, \Omega(n))$ is absolutely bounded for all 
$16 \leq n \leq x$ as $x \rightarrow \infty$. 
The global extrema of this function on the positive integers 
are each attained as $\mathcal{R}(n_{\ell}, j_{\ell}), \mathcal{R}(n_{u}, j_{u})$ for 
finite integers $2 \leq n_{\ell}, n_u < +\infty$ and 
$j_{\ell},j_u = \Omega(n_{\ell}), \Omega(n_u) \in [1, 12]$. 

The analog of the Erd\H{o}s-Kac theorem for the function $\Omega(n)$ is given by 
\cite[Thm.~7.21; \S 7.4]{MV} 
\[
\frac{1}{x} \times \#\left\{3 \leq n \leq x: \frac{\Omega(n) - \log\log x}{\sqrt{\log\log x}} \leq z\right\} = 
     \Phi(z) + O\left(\frac{1}{\sqrt{\log\log x}}\right), \mathtext{ for } z \in (-\infty, \infty). 
\]
Then we have that as $x \rightarrow \infty$ 
\begin{align*}
L_{E,\Omega}(x) & \ll \sum_{\substack{n \leq x \\ \Omega(n) \geq \frac{3}{2} \log\log x}} 
	\Omega(n) \log \Omega(n) 
     \ll \left(\sum_{\substack{n \leq x \\ \Omega(n) \geq \frac{3}{2} \log\log x}} 
	\Omega(n)^2\right)^{\frac{1}{2}} \times 
     \left(\sum_{\substack{n \leq x \\ \Omega(n) \geq \frac{3}{2} \log\log x}} 
     \log^2 \Omega(n) \right)^{\frac{1}{2}} \\ 
     & \ll (\log\log x) \times 
	\left(\frac{x}{\sqrt{\log\log x}} \times \bigintsss_{\log\log x}^{\log_2(x)} t^2 
	e^{-\frac{\left(t-\log\log x\right)^2}{2\log\log x}} dt\right)^{\frac{1}{2}} \times 
     \#\left\{n \leq x: \Omega(n) \geq \frac{3}{2} \log\log x\right\}^{\frac{1}{2}}.
\end{align*} 
The change of variable $u := \frac{t-\log\log x}{\sqrt{\log\log x}}$ and 
Theorem \hlocalref{theorem_MV_Thm7.20-init_stmt} applied to the previous equation 
show that 
\begin{align*}
L_{E,\Omega}(x) & \ll 
     x (\log\log x)^2 (\log x)^{\frac{1}{4}\left(1 - 3\log\left(\frac{3}{2}\right)\right)} 
     \ll x (\log\log x)^2 (\log x)^{-0.0540987}. 
     \qedhere
\end{align*}
\end{proof} 

\end{document}